\newcommand{\bC}{{\mathbb C}}
\newcommand{\bH}{{\mathbb H}}
\newcommand{\bP}{{\mathbb P}}
\newcommand{\bR}{{\mathbb R}}
\newcommand{\bQ}{{\mathbb Q}}
\newcommand{\bZ}{{\mathbb Z}}
\newcommand{\cC}{{\mathcal C}}
\newcommand{\cD}{{\mathcal D}}
\newcommand{\cH}{{\mathcal H}}
\newcommand{\cI}{{\mathcal I}}
\newcommand{\cL}{{\mathcal L}}
\newcommand{\cZ}{{\mathcal Z}}
\newcommand{\cP}{{\mathcal P}}
\newcommand{\cS}{{\mathcal S}}
\newcommand{\cX}{{\mathcal X}}
\newcommand{\rk}{{\rm rk}}
\newcommand{\Hom}{{\rm{Hom}}}
\newcommand{\im}{{\rm{Im}}}
\newcommand{\Cok}{{\rm{Coker}}}
\newcommand{\Ker}{{\rm{Ker}}}
\newcommand{\wti}{\widetilde}
\newcommand{\ra}{\rightarrow}
\newcommand{\lra}{\longrightarrow}
\def\coker{{\rm {coker}}}
\def\im{{\rm {Image}}}
\def\var{{\it {var}}}
\def\Var{{\it {Var}}}
\def\can{{\it {can}}}
\def\sp{{\it {sp}}}
\def\rat{{\it {rat}}}
\newcommand{\MHM}{{MHM}}
\theoremstyle{plain}
\newtheorem{thm}{Theorem}[section]
\newtheorem{cor}[thm]{Corollary}
\newtheorem{lem}[thm]{Lemma}
\newtheorem{prop}[thm]{Proposition}
\newtheorem{ass}[thm]{Assumption}
\theoremstyle{definition}
\newtheorem{df}[thm]{Definition}
\newtheorem{rem}[thm]{Remark}
\newtheorem{example}[thm]{Example}
\def\be{\begin{equation}}
\def\ee{\end{equation}}
\def\bt{\begin{thm}}
\def\et{\end{thm}}
\def\bc{\begin{cor}}
\def\ec{\end{cor}}
\def\br{\begin{rem}}
\def\er{\end{rem}}
\def\bp{\begin{prop}}
\def\ep{\end{prop}}
\def\bl{\begin{lem}}
\def\el{\end{lem}}
\def\bn{\begin{enumerate}}
\def\en{\end{enumerate}}
\def\bex{\begin{example}}
\def\eex{\end{example}}
\def\bd{\begin{df}}
\def\ed{\end{df}}
\title[Intersection spaces and perverse sheaves]{Intersection spaces, perverse sheaves \\ and type IIB string theory}
\author{Markus Banagl}
\address{Mathematisches Institut, Universit\"at Heidelberg, Im Neuenheimer Feld 288, 69120 Heidelberg, Germany}
\email{banagl@mathi.uni-heidelberg.de}
\author{Nero Budur}
\address{Department of Mathematics,
University of Notre Dame, 255 Hurley Hall, IN 46556, USA} \email{nbudur@nd.edu}
\author{Lauren\c{t}iu Maxim}
\address{Department of Mathematics, University of Wisconsin, 480 Lincoln Drive, Madison, WI 53706, USA}
\email {maxim@math.wisc.edu}
\keywords{Conifolds, D-branes, type II string theory, intersection cohomology,
   perverse sheaves, Verdier duality, Poincar\'e duality, hypersurface singularities, monodromy}
\subjclass[2000]{32S25, 32S30, 32S55, 55N33, 57P10, 32S35, 81T30, 14J33}
\thanks {The first author was in part supported by a research grant of the
 Deutsche Forschungsgemeinschaft. The second author was supported by the Simons Foundation grant \#245850. The third author was supported by the NSF-1005338.}
\begin{document}

\begin{abstract}
The method of intersection spaces associates rational Poincar\'e complexes to
singular stratified spaces. For a conifold transition, the resulting cohomology theory
yields the correct count of all present massless 3-branes in type IIB string theory,
while intersection cohomology yields the correct count of massless 2-branes in type IIA theory. 
For complex projective hypersurfaces with an isolated singularity, we show that
the cohomology of intersection spaces is the hypercohomology of a perverse sheaf,
the intersection space complex, 
on the hypersurface.  Moreover,  the intersection space complex underlies a mixed Hodge module, so its hypercohomology groups carry canonical mixed Hodge structures.
For a large class of singularities, e.g., weighted homogeneous ones, global Poincar\'e duality is induced by a more refined
Verdier self-duality isomorphism for this perverse sheaf.
For such singularities, we prove furthermore 
that the pushforward of the constant sheaf of a nearby smooth deformation
under the specialization map to the singular space splits off the intersection
space complex as a direct summand.
The complementary summand is the contribution of the singularity.
Thus, we obtain for such hypersurfaces a mirror statement of the 
Beilinson-Bernstein-Deligne decomposition 
of the pushforward of the constant sheaf under an algebraic
resolution map into the intersection sheaf plus
contributions from the singularities. 
\end{abstract}

\maketitle

\tableofcontents

\section{Introduction}\label{intro}

In addition to the four dimensions that model our space-time, string theory requires six dimensions for a string to vibrate. Supersymmetry considerations force these six real dimensions to be a Calabi-Yau space. However, given the multitude of known topologically distinct Calabi-Yau $3$-folds, the string model remains undetermined. So it is  important to have mechanisms that allow one to move from one Calabi-Yau space to another. In Physics, a solution to this problem was first proposed by Green-H\"ubsch \cite{GH1, GH2}  who conjectured that topologically distinct Calabi-Yau's are connected to each other by means of {\it conifold transitions}, which  induce a phase transition between the corresponding string models. 

A conifold transition starts out with a smooth Calabi-Yau $3$-fold, passes through a singular variety --- 
{\it the conifold} --- by a deformation of complex structure, and arrives at a topologically distinct smooth Calabi-Yau $3$-fold by a small resolution of singularities. The deformation collapses embedded three-spheres (the {\it vanishing cycles}) to isolated ordinary double points, while the resolution resolves the singular points by replacing them with  $\bC\bP^1$'s.   In Physics, the topological change was interpreted by Strominger by the condensation of massive black holes to massless ones. In type IIA string theory, there are charged two-branes that wrap around the $\bC\bP^1$ $2$-cycles, and which become massless when these $2$-cycles are collapsed to points by the (small) resolution map. Goresky-MacPherson's intersection homology \cite{GM1}, \cite{GM2}
of the conifold accounts for all of these massless two-branes
(\cite[Proposition 3.8]{Ba}), and since it also satisfies
Poincar\'e duality, may be viewed as
a physically correct homology theory for type IIA string theory. Similarly, in type IIB string theory there are charged three-branes wrapped around the vanishing cycles, and which become massless as these vanishing cycles are collapsed by the deformation of complex structure. Neither ordinary homology nor intersection homology of the conifold account for these massless three-branes. So  a natural problem is to find a physically correct homology theory for the IIB string theory. A solution to this question was suggested by the first author in \cite{Ba} via his {\it intersection space homology theory}.

In \cite{Ba}, the first author develops a homotopy-theoretic method which associates to 
certain types of singular spaces $X$ (e.g., a conifold) a CW complex $IX$, called the {\it intersection space} of $X$, which is a (reduced) rational Poincar\'e complex, i.e., its reduced homology groups satisfy Poincar\'e Duality over the rationals.
The intersection space $IX$ associated to a singular space $X$ is constructed by replacing links of singularities of $X$ by their corresponding Moore approximations, a process called {\it spatial homology truncation}.  The {\it intersection space homology} $$HI_*(X;\bQ):={H}_*(IX;\bQ)$$  is not isomorphic to the intersection homology of the space $X$, and in fact it can be seen that in the middle degree and for isolated singularities, 
this new theory takes more cycles into account than intersection homology. 
For a conifold $X$, Proposition 3.6 and
Theorem 3.9 in \cite{Ba} establish that the dimension of $HI_3 (X)$ equals the number
of physically present massless 3-branes in IIB theory. \\ 

In  mirror symmetry, given a Calabi-Yau $3$-fold $X$, the {\it mirror map} associates to it another Calabi-Yau $3$-fold $Y$ so that type IIB string theory on $\bR^4 \times X$ corresponds to type IIA string theory on $\bR^4 \times Y$. If $X$ and $Y$ are smooth, their Betti numbers are related by precise algebraic identities, e.g., $\beta_3(Y)=\beta_2(X)+\beta_4(X)+2$, etc.  
Morrison \cite{Mor} conjectured that the mirror of a conifold transition is again a conifold transition, but performed in the reverse order (i.e.,  by exchanging resolutions and deformations). Thus, if  $X$ and $Y$ are mirrored conifolds (in mirrored conifold transitions), the intersection space homology of one space and the intersection homology of the mirror space form a {\it mirror-pair}, in the sense that 
$ \beta_3(IY)=I\beta_2(X)+I\beta_4(X)+2,$ etc., where $I\beta_i$ denotes the $i$-th intersection homology Betti number (see \cite{Ba} for details).
This suggests that it should be possible to compute the intersection space homology $HI_*(X;\bQ)$ of a variety $X$ in terms of the topology of a smoothing deformation, by ``mirroring" known results relating the intersection homology groups $IH_*(X;\bQ)$ of $X$ to the topology of a resolution of singularities. 
Moreover, the above identity of Betti numbers  can serve as a beacon in constructing a mirror $Y$ for a given singular $X$, as it restricts the topology of those $Y$ that can act as a mirror of $X$.

This point of view was exploited in \cite{BM}, where the first and third authors  considered the case of a  hypersurface $X \subset \bC\bP^{n+1}$ with only isolated singularities. For simplicity, let us assume that  $X$ has only one isolated singular point $x$, with Milnor fiber $F_x$ and local monodromy operator $T_x:H_n(F_x) \to H_n(F_x)$. Let $X_s$ be a nearby smoothing of $X$. Then it is shown in \cite{BM} that $H_*(IX;\bQ)$ is a vector subspace of $H_*(X_s;\bQ)$, while an isomorphism holds iff the local monodromy operator $T_x$ is trivial (i.e., in the case when $X$ is a {\it small degeneration} of $X_s$). This result can be viewed as mirroring the well-known fact that the intersection homology groups $IH_i(X;\bQ)$ of $X$ are vector subspaces of the corresponding homology groups $H_i(\widetilde{X};\bQ)$ of any resolution $\widetilde{X}$ of $X$, with an isomorphism in the case of a small resolution (e.g., see \cite{DM,GM3}).   

Guided by a similar  philosophy, in this paper we construct a perverse sheaf $\cI\cS_X$, the {\it intersection-space complex}, whose global hypercohomology calculates (abstractly) the intersection space cohomology groups of a projective hypersurface $X\subset \bC\bP^{n+1}$ with one isolated singular point. Our result ``mirrors" the fact that the intersection cohomology groups can be computed from a perverse sheaf, namely the intersection cohomology complex $\cI\cC_X$. We would like to point out that 
for general $X$ there cannot exist a perverse sheaf $\mathcal{P}$ on $X$
such that $HI^\ast (X;\bQ)$ can be computed from the hypercohomology
group $\bH^\ast (X;\mathcal{P})$, as follows from the stalk vanishing
conditions that such a $\mathcal{P}$ satisfies. However, 
Theorem \ref{thmIS} of the present paper shows that this goal \emph{can} be achieved in the case when $X$ is a hypersurface with only isolated singularities in a (smooth) deformation space, this being in fact the main source of examples for conifold transitions. Furthermore, by construction, 
 the intersection space complex $\cI\cS_X$ underlies a mixed Hodge module, therefore its hypercohomology groups carry canonical mixed Hodge structures. This result ``mirrors" the corresponding one for the intersection cohomology complex $\cI\cC_X$.

It follows from the above interpretation of intersection space cohomology that the groups $\bH^\ast (X;\cI \cS_X)$ satisfy Poincar\'e duality globally,
which immediately raises the question, whether this duality is induced by a more
powerful (Verdier-) self-duality isomorphism $\cD (\cI \cS_X)\simeq \cI\cS_X$ in the
derived category of constructible bounded sheaf complexes on $X$. 
Part $({c})$ of our main Theorem \ref{thmIS}  (see also Corollary \ref{corselfdual}) affirms that this is indeed the case, provided the local monodromy $T_x$ at the singular point is semi-simple in the eigenvalue $1$. This assumption is satisfied by a large class of isolated singularities, e.g., the weighted homogeneous ones.

Our fourth result ``mirrors'' the Beilinson-Bernstein-Deligne decomposition 
\cite{BBD} of the pushforward $Rf_*\bQ_{\widetilde{X}}[n]$ of the constant sheaf $\bQ_{\widetilde{X}}$ under an algebraic
resolution map $f:\widetilde{X} \to X$ into the intersection sheaf $\cI\cC_X$ of $X$ plus
contributions from the singularities of $X$. 
Suppose that $X$ sits as $X=\pi^{-1}(0)$ in a family $\pi: \wti{X}\to S$ of
projective hypersurfaces over a small disc around $0 \in \bC$ such that $X_s = \pi^{-1} (s)$ is smooth over nearby
$s\in S,$ $s\not= 0$. Under the above assumption on 
the local monodromy $T_x$ at the singular point,
we prove (cf. Theorem \ref{thmIS}({c}) and Corollary \ref{corsplitting}) that the nearby cycle complex 
$\psi_\pi\bQ_{\wti{X}}[n]$, a perverse sheaf on $X$, splits off
the intersection space complex $\cI\cS_X$ as a direct summand (in the category of
perverse sheaves). The complementary summand has the interpretation as being
contributed by the singularity $x$, since it is supported only over $\{ x \}$.
For $s$ sufficiently close to $0$, there is a map 
$sp: X_s \to X$, the specialization map. It follows by the decomposition of the nearby cycle
complex that the (derived) pushforward $Rsp_* \bQ_{X_s}[n]$ of the constant
sheaf on a nearby smoothing of $X$ splits off $\cI\cS_X$ as a direct summand,
\[ Rsp_* \bQ_{X_s}[n] \simeq \cI\cS_X \oplus \cC. \]

Finally, we would like to point out that since our paper is about a certain perverse sheaf
and its properties, and perverse sheaves are somewhat complicated objects,
it would be valuable to have an alternative, more elementary description of the perverse sheaf under consideration. To this end, we note that there are various more ``elementary" descriptions of the category of perverse sheaves available, for example the zig-zag category of MacPherson-Vilonen 
\cite{MV}. The latter description is particularly applicable for stratifications
whose singular strata are contractible. Since this is the case in the present
paper, it is desirable to understand the intersection space complex $\cI\cS_X$ and
its properties and associated short exact sequence
also on the level of zig-zags. We do provide such an analysis in Sections \ref{split} and \ref{sd}. 
In particular, under the above technical assumption on the local monodromy operator, we derive the splitting and the self-duality of the intersection space complex also directly on the level of zig-zags.\\

In \cite{huebschworkingdef}, T. H\"ubsch asks for a homology theory
$SH_\ast$ (``stringy homology'')
on $3$-folds $X,$ whose singular set $\Sigma$ contains only isolated singularities, such that \\

\noindent (SH1) $SH_\ast$ satisfies Poincar\'e duality, \\
\noindent (SH2) $SH_i (X) \cong H_i (X-\Sigma)$ for $i<3$, \\
\noindent (SH3) $SH_3 (X)$ is an extension of $H_3 (X)$ by
$\ker (H_3 (X-\Sigma)\rightarrow H_3 (X)),$ \\
\noindent (SH4) $SH_i (X) \cong H_i (X)$ for $i>3$.\\

\noindent Such a theory would record both the type IIA \emph{and} the type IIB
massless D-branes simultaneously.
Intersection homology satisfies all of these axioms with the exception
of axiom (SH3).  
Regarding (SH3), H\"ubsch notes further that
``the precise nature of this extension is to be determined
from the as yet unspecified general cohomology theory.'' Using the
homology of intersection spaces one
obtains an answer: The group
$HI_3 (X;\bQ)$ satisfies axiom (SH3) for any $3$-fold $X$ with isolated
singularities and simply connected links.
On the other hand, $HI_\ast (X;\bQ)$
does not satisfy axiom (SH2) (and thus, by Poincar\'e duality, does not
satisfy (SH4)), although it does satisfy (SH1) (in addition to (SH3)).
The pair $(IH_\ast (X;\bQ), HI_\ast (X;\bQ))$ does contain all the 
information that $SH_\ast (X)$ satisfying
(SH1)--(SH4) would contain and so may be regarded as a solution to
H\"ubsch' problem. In fact, one could set
\[ SH_i (X) = \begin{cases} IH_i (X;\bQ),& i\not= 3, \\ HI_i (X), & i=3.
  \end{cases} \]
This $SH_\ast$ then satisfies all axioms (SH1)--(SH4). 
A construction of $SH_\ast$ using the 
description of perverse sheaves by MacPherson-Vilonen's  zig-zags
\cite{MV} has been given by 
A. Rahman in \cite{R} for isolated singularities.
As already mentioned  above, zig-zags are also used in the present paper to obtain topological interpretations of our splitting and self-duality results.\\

\textbf{Acknowledgments.} 
We thank Morihiko Saito for useful discussions. We also thank J\"org Sch\"urmann for valuable comments on an earlier version of this paper, and for sharing with us his preprint \cite{DMSS}.

\section{Prerequisites}\label{prereq}

\subsection{Isolated hypersurface singularities}\label{hyper}
Let $f$ be a homogeneous polynomial in $n+2$ variables with complex coefficients such that the
complex projective hypersurface
\[ X = X(f) = \{ x\in \mathbb{P}^{n+1} ~|~ f(x)=0 \} \]
has only one isolated singularity $x$. 
Locally, identifying $x$ with the origin of $\bC^{n+1}$, the singularity is described
by a reduced analytic function germ
\[ g: (\bC^{n+1},0)\longrightarrow (\bC,0). \]
Let $B_\epsilon \subset \bC^{n+1}$ be a closed ball of radius $\epsilon >0$ centered at the origin and let
$S_\epsilon$ be its boundary, a sphere of dimension $2n+1$. 
Then, according to Milnor \cite{Mi}, for $\epsilon$ small enough,  
the intersection $X\cap B_\epsilon$ is homeomorphic to the cone over the {\it link}
$L = X\cap S_\epsilon = \{ g=0 \} \cap S_\epsilon$ of the singularity $x$, and the {\it Milnor map} of $g$ at radius $\epsilon,$
\[ \frac{g}{|g|}: S_\epsilon \setminus L \longrightarrow S^1, \]
is a (locally trivial) fibration. 
The link $L$ is a $(n-2)$-connected $(2n-1)$-dimensional submanifold
of $S_\epsilon$.
The fiber $F^{\circ}$ of the Milnor map is an open smooth manifold of real dimension
$2n$, which shall be called the {\it open Milnor fiber} at $x$. Let $F$ be the closure in $S_{\epsilon}$ of the fiber of $g/|g|$ over $1\in S^1$.
Then $F$, the {\it closed Milnor fiber} of the singularity, is a compact manifold with 
boundary $\partial F = L,$ the link of $x$. Note that $F^{\circ}$ and $F$ are homotopy equivalent, and in fact they have the homotopy type of a bouquet of $n$-spheres, see \cite{Mi}. The number $\mu$ of spheres in
this bouquet is called the {\it Milnor number} and can be computed as
\[ \mu = \dim_\bC \frac{\mathcal{O}_{n+1}}{J_g}, \]
with $\mathcal{O}_{n+1} = \bC \{ x_0, \ldots, x_n \}$ the $\bC$-algebra of all convergent
power series in $x_0, \ldots, x_n$, and $J_g = (\partial g/\partial x_0, \ldots,
\partial g/\partial x_n)$ the Jacobian ideal of the singularity. 
Associated with the Milnor fibration $F^{\circ} \hookrightarrow S_\epsilon - L \to S^1$ is a 
monodromy homeomorphism $h: F^{\circ} \to F^{\circ}$. Using the identity
$L \to L,$ $h$ extends to a homeomorphism
$h: F  \to F$ because $L$ is the binding of the corresponding open
book decomposition of $S_\epsilon$. This homeomorphism induces the (local) {\it monodromy operator}
\[ T_x = h_\ast: H_\ast (F;\bQ) \stackrel{\cong}{\longrightarrow} H_\ast (F;\bQ). \]
If $n \geq 2$, the difference between the monodromy operator and the identity fits into the
{\it Wang sequence} of the fibration,
\begin{equation} \label{wang}
0 \to H_{n+1} (S_\epsilon - L;\bQ) \to H_n (F;\bQ) 
 \stackrel{T_x-1}{\longrightarrow} H_n (F;\bQ) \to
 H_n (S_\epsilon - L;\bQ)\to 0.
\end{equation}

\subsection{Intersection space (co)homology of projective hypersurfaces}\label{IShyp}
Let $X$ be a complex projective  hypersurface of dimension $n> 2$ with only one isolated singular point $x$. The assumption on dimension is needed to assure that the link $L$ of $x$ is simply-connected, so the intersection space $IX$ can be defined as in \cite{Ba}. The actual definition of an intersection space is not needed in this
paper, only the calculation of Betti numbers, as described in the next theorem, will be used in the sequel. 
Nevertheless, let us indicate briefly how $IX$ is obtained from $X$. Let $M$ be the complement
of an open cone neighborhood of $x$ so that $M$ is a compact manifold with boundary
$\partial M = L$. Given an integer $k$, a \emph{spatial homology $k$-truncation} is a topological space
$L_{<k}$ such that $H_i (L_{<k})=0$ for $i\geq k$,
together with a continuous map $f: L_{<k} \to L$ which induces a homology-isomorphism
in degrees $i<k$. Using the truncation value $k=n,$ the intersection space $IX$ is the homotopy cofiber of the composition
\[ L_{<n} \stackrel{f}{\longrightarrow} L=\partial M \stackrel{\operatorname{incl}}{\longrightarrow} M. \]
Let $X_s$ be a nearby smooth deformation of $X$. Denote by $T_x$ the monodromy operator  on the middle cohomology of the Milnor fiber of the hypersurface germ $(X,x)$. Denote by $$HI^*(X;\bQ):=H^*(IX;\bQ)$$ the intersection-space cohomology of $X$.

\bt \label{thmBM}{\rm (\cite{BM}[Thm.4.1, Thm.5.2])}  Under the above assumptions and notations the following holds:
\[
 \dim HI^i(X;\bQ) =\left \{
\begin{array}{ll}
\dim H^i(X_s;\bQ)  & \text{ if } i\ne  n, 2n; \\
 \dim H^i(X_s;\bQ) - \rk (T_x- 1) & \text{ if } i=n;\\
0 & \text{ if }i=2n. \\
\end{array}\right.
\]
Moreover, if  $H_{n-1}(L;\bZ)$ is torsion-free, then for $i\ne n$ the above equality is given by canonical isomorphisms of vector spaces
induced by a continuous map.
\et

\subsection{Perverse sheaves}\label{perv} Let $X$ be a complex algebraic variety of complex dimension $n$, and $D^b(X)$ the bounded derived category of complexes of sheaves of rational vector spaces on $X$. If $\cX$ is a Whitney stratification of $X$, we say that $K \in D^b(X)$ is $\cX$-{\it cohomologically constructible} if, for all $i \in \bZ$ and any (pure) stratum $S$ of $\cX$, the cohomology sheaves $\cH^i(K)|_{S}$ are locally constant with finite dimensional stalks on $S$. We denote by $D^b_c(X)$ the derived category of bounded constructible sheaf complexes on $X$, i.e., the full subcategory of $D^b(X)$ consisting of those complexes which are cohomologically constructible with respect to some stratification of $X$. 

The abelian category of {\it perverse sheaves} on $X$ is the full sub-category $Perv(X)$ of $D^b_c(X)$ whose objects are characterized as follows. Assume $K \in D^b_c(X)$ is cohomologically constructible with respect to a stratification $\cX$ of $X$, and denote by $i_l:S_l \hookrightarrow X$ the corresponding embedding of a stratum of complex dimension $l$. Then $K$ is perverse if it satisfies  the following properties: 
\bn 
\item[(i)] {\it condition of support}:
\[ \cH^j(i_l^*K)=0, \ {\rm for \ any} \ l \ {\rm and} \  j \ {\rm with} \ j > -l, \]
\item[(ii)] {\it condition of cosupport}:
\[ \cH^j(i_l^!K)=0, \ {\rm for \ any} \ l \ {\rm and} \  j \ {\rm with} \ j < -l .\]
\en
If $X$ is smooth, then any $K \in D^b_c(X)$, which is constructible with respect to the intrinsic Whitney stratification of $X$ with only one stratum, is perverse if and only if $K$ is (up to a shift) just a local system. More precisely, in this case we have that $K \simeq \cH^{-n}(K)[n]$. More generally, if $K \in Perv(X)$ is supported on a closed $l$-dimensional stratum $S_l$, then $K \simeq \cH^{-l}(K)[l]$. 

Let us also recall here that the Verdier duality functor as well as restriction to open subsets preserve perverse objects.

In this paper, we will be  interested in the situation when the variety $X$ has only isolated singularities. For example, if $X$ has only one singular point $x$, then $X$ can be given a Whitney stratification $\mathcal{X}$ with only two strata: $\{x\}$ and $X \setminus \{x\}$. 
Denote by $i:\{x\} \hookrightarrow X$ and $j:X \setminus \{x\} \hookrightarrow X$ the corresponding closed and open embeddings. Then a complex $K \in D^b_c(X)$, which is constructible with respect to $\mathcal{X}$, is perverse on $X$ if $j^*K[-n]$ is cohomologically
a local system on $X^{\circ}:=X \setminus \{x\}$ and, moreover, the following two conditions hold:
 \[ H^j(i^*K)=0, \ {\rm for \ any} \ j > 0, \]
 and
 \[ H^j(i^!K)=0, \ {\rm for \ any} \ j < 0. \]

\subsection{Nearby and vanishing cycles}\label{nv}
Let $\pi:\wti{X}\ra S$ be a projective morphism from an $(n+1)$-dimensional complex manifold  onto a small disc $S$ around the origin in $\bC$. Assume $\pi$ to be smooth except over $0$. Denote by $X=\pi^{-1}(0)$ the singular zero-fiber and by $X_s=\pi^{-1}(s)$  ($s\ne 0$) the smooth projective variety which is the generic fiber of $\pi$. 

Let  $\psi_\pi, \phi_\pi: D^b_c (\wti{X}) \to D^b_c ({X})$ denote the nearby and, respectively, vanishing cycle functors of $\pi$ (e.g., see \cite{Di} and the references therein). These functors come equipped with monodromy automorphisms, both of which will be denoted by $T$. 
Instead of defining the nearby and vanishing cycle functors, we list their main properties as needed in this paper.
First, we have that 
\be\label{one}
H^i(X_s;\bQ)= \bH^i(X;\psi_\pi \bQ_{\wti{X}}),
\ee
and, for a point inclusion $i_x:\{x\}\hookrightarrow X$ with $F_x$ the Milnor fiber of the hypersurface singularity germ $(X,x)$,
\be
H^i(F_x;\bQ)= H^i(i_x^*\psi_\pi \bQ_{\wti{X}}) \ee
and
\be\label{456} \wti{H}^i(F_x;\bQ)= H^i(i_x^*\phi_\pi \bQ_{\wti{X}}),
\ee
with compatible monodromies $T_x$ and $T$. Moreover, the support of the vanishing cycles is 
\begin{equation}\label{eqSupp}
Supp (\phi_\pi \bQ_{\wti{X}}) = Sing (X),
\end{equation}
the singular locus of $X$, see \cite{Di}[Cor.6.1.18]. In particular, if $X$ has only isolated singularities, then:
\be
\bH^i(X;\phi_\pi \bQ_{\wti{X}}) = \bigoplus_{x \in Sing(X)} \wti{H}^i(F_x;\bQ).
\ee

By the definition of vanishing cycles, for every constructible sheaf complex $K\in D^b_c(\wti{X})$ there is a unique distinguished triangle
\begin{equation}\label{eqK}
t^*K \overset{\sp}{\lra} \psi_\pi K \overset{\can}{\lra} \phi_\pi K\mathop{\lra}^{[+1]}
\end{equation}
in $D^b_c(X)$, where  $t:X\hookrightarrow \wti{X}$ is the inclusion of the zero-fiber of $\pi$.  There is a similar distinguished triangle associated to the {\it variation morphism}, see \cite{KS}[p.351-352], namely:
\begin{equation}\label{varnew}
\phi_\pi K \mathop{\lra}^{\var} \psi_\pi K \lra t^![2] K \mathop{\lra}^{[+1]}.
\end{equation}
The variation morphism $\var:\phi_\pi K \to \psi_\pi K$  is heuristically defined by the cone of the pair of morphisms (but see the above reference \cite{KS} for a formal definition):
$$(0, T-1): [t^*K\to \psi_\pi K] \lra [0 \to \psi_\pi K].$$ Moreover, we have: $\can \circ \var=T-1$ and $\var \circ \can=T-1$.

The monodromy automorphism $T$ acting on the nearby and vanishing cycle functors has a Jordan decomposition $T=T_u \circ T_s=T_s \circ T_u$, where $T_s$ is semisimple (and locally of finite order) and  $T_u$ is  unipotent.  For any $\lambda \in \bQ$  and $K\in D^b_c(\wti{X})$, let us denote by  $\psi_{\pi,\lambda}K$ the generalized $\lambda$-eigenspace for $T$,
and similarly for $\phi_{\pi,\lambda}K$. By the definition of vanishing cycles, the canonical morphism $\can$ induces morphisms $$\can: \psi_{\pi,\lambda}K \lra \phi_{\pi,\lambda}K$$ which are isomorphisms for $\lambda \neq 1$, and there is a distinguished triangle 
\begin{equation}\label{eqK1}
t^*K \overset{\sp}{\lra} \psi_{\pi,1} K \overset{\can}{\lra} \phi_{\pi,1} K\mathop{\lra}^{[+1]} \ .
\end{equation}
There are decompositions
\be\label{678} \psi_{\pi}= \psi_{\pi,1} \oplus  \psi_{\pi,\neq1} \ \ {\rm and} \ \ \phi_{\pi}= \phi_{\pi,1} \oplus  \phi_{\pi,\neq1} \ee so that $T_s=1$ on $\psi_{\pi,1}$ and $\phi_{\pi,1}$, and $T_s$ has no $1$-eigenspace on $\psi_{\pi,\neq1}$ and $\phi_{\pi,\neq1}$. Moreover, $\can:\psi_{\pi,\neq1} \to \phi_{\pi,\neq1}$ and $\var: \phi_{\pi,\neq1} \to \psi_{\pi,\neq1}$ are isomorphisms.

Let $$N:=\log (T_u),$$ and define the morphism $\Var$
\begin{equation}
\phi_{\pi} K \mathop{\lra}^{\Var} \psi_{\pi} K
\end{equation}
by the cone of the pair $(0,N)$, see \cite{Sa}. (We omit the Tate twist $(-1)$ appearing in loc. cit., since it is not relevant for the topological considerations below.) Moreover, we have $\can \circ \Var=N$ and $\Var \circ \can=N$, and there is a distinguished triangle:
\begin{equation}\label{varnew1}
\phi_{\pi,1} K \mathop{\lra}^{\Var} \psi_{\pi,1} K \lra t^![2] K \mathop{\lra}^{[+1]}.
\end{equation}

The functors  $\psi_\pi[-1]$ and $\phi_\pi[-1]$ from $D^b_c(\wti{X})$ to $D^b_c(X)$ commute with the Verdier duality functor $\cD$ up to natural isomorphisms \cite{Ma}, and send perverse sheaves to perverse sheaves.  To simplify the notation, we denote the {\it perverse} nearby and vanishing cycle functors by  $${^p}\psi_\pi:=\psi_\pi[-1] \ \ \ {\rm and}  \ \ \ {^p}\phi_\pi:=\phi_\pi[-1],$$
respectively.
 
The functors ${^p}\psi_\pi$ and ${^p}\phi_\pi$ acting on perverse sheaves (such as $\bQ_{\wti{X}}[n+1]$) lift to functors $\psi^H_\pi:\MHM(\wti{X}) \to \MHM(X)$ and resp. $\phi^H_\pi: \MHM(\wti{X}) \to \MHM(X)$ defined on the category $\MHM(\wti{X})$ of {\it mixed Hodge modules} on $\wti{X}$, see \cite{Sa2}. More precisely, if $$\rat:\MHM(\wti{X}) \to Perv(\wti{X})$$ (and similarly for $X$) is the forgetful functor assigning to a mixed Hodge module the underlying perverse sheaf, then $$\rat \circ \psi^H_\pi={^p}\psi_\pi \circ \rat \ \ {\rm and} \ \ \rat \circ \phi^H_\pi={^p}\phi_\pi \circ \rat.$$ Moreover,  the above morphisms $\can$, $N$, $\Var$ and decompositions ${^p}\psi_\pi={^p}\psi_{\pi,1} \oplus {^p}\psi_{\pi,\neq 1}$ (and similarly for ${^p}\phi_\pi$) lift to the category of mixed Hodge modules, see \cite{Sa,Sa2} for details.
 
 The following semisimplicity criterion for perverse sheaves will be needed in Lemma \ref{lemnv} below, see \cite{Sa}[Lemma 5.1.4]  (as reformulated in \cite{Sa3}[(1.6)]):
 \begin{prop}\label{ss} Let $Z$ be a complex manifold and $K$ be a perverse sheaf on $Z$. Then  the following conditions are equivalent:
 \begin{enumerate}
 \item[(a)]  In the category $Perv(Z)$ one has a splitting
 $${^p}\phi_{g,1}(K)=\Ker\left(\Var:{^p}\phi_{g,1}(K) \to {^p}\psi_{g,1}(K) \right) \oplus \im \left(\can:{^p}\psi_{g,1}(K) \to {^p}\phi_{g,1}(K)  \right)$$
 for any locally defined holomorphic function $g$ on $Z$.
 \item[(b)] $K$ admits a decomposition by strict support, i.e., it can be written canonically as a direct sum of twisted intersection cohomology complexes.
 \end{enumerate}
 \end{prop}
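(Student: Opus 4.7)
The plan is to prove the two implications (a)$\Rightarrow$(b) and (b)$\Rightarrow$(a) separately, using the structure theory of perverse sheaves and the compatibility of $\can$ and $\Var$ with the Jordan decomposition of monodromy recalled in Subsection~\ref{nv}. Throughout, I exploit the relations $\can\circ\Var=N$ on ${^p}\phi_{g,1}$ and $\Var\circ\can=N$ on ${^p}\psi_{g,1}$, together with the nilpotency of $N=\log(T_u)$.

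For the direction (b)$\Rightarrow$(a), by additivity of ${^p}\psi_g$, ${^p}\phi_g$, $\can$ and $\Var$, I would reduce to the case where $K=\cI\cC_{\overline{Y}}(L)$ is a single intersection cohomology complex with strict support an irreducible closed subvariety $\overline{Y}\subset Z$ and $L$ a local system on a Zariski open $Y\subset\overline{Y}$. One then splits into two sub-cases. If $g\vert_{\overline{Y}}\not\equiv 0$, then ${^p}\phi_{g,1}(K)$ is a perverse sheaf supported on the proper subvariety $\overline{Y}\cap g^{-1}(0)$, and unwinding $\can$ and $\Var$ from the Milnor fibration of $g\vert_{Y}$ yields a direct verification of the splitting from the explicit monodromy action of $L$ along the vanishing cycles. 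If $g\vert_{\overline{Y}}\equiv 0$, one invokes the Lefschetz-type decomposition associated to the monodromy filtration of $N$ on ${^p}\psi_{g,1}(K)$ and ${^p}\phi_{g,1}(K)$, which is available thanks to the polarisability of the underlying pure Hodge module.

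For the harder direction (a)$\Rightarrow$(b), I would proceed by induction on $\dim\, \mathrm{Supp}(K)$. Set $Y=\mathrm{Supp}(K)$, which we may assume irreducible. If $K$ already has strict support $Y$, there is nothing to prove; otherwise there is a nontrivial perverse subobject (or quotient) $K'$ of $K$ supported on a proper subvariety $W\subsetneq Y$. Choose a local holomorphic function $g$ whose zero set meets $Y$ in a divisor containing $W$ but not all of $Y$. Applied to this $g$, the hypothesis (a) provides a splitting ${^p}\phi_{g,1}(K)=\Ker(\Var)\oplus\im(\can)$; through the distinguished triangles \eqref{eqK1} and \eqref{varnew1}, this splitting lifts to a direct-sum decomposition of $K$ which peels off $K'$ as a direct summand. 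Iterating and applying the induction hypothesis to the complementary summand then yields the desired strict-support decomposition of $K$.

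The main obstacle is the (a)$\Rightarrow$(b) step: one must convert a \emph{local} (in $g$) splitting of vanishing cycles into a \emph{global} direct-sum decomposition of $K$, which requires checking that the summands extracted from different choices of $g$ glue consistently into honest subobjects of $K$. In Saito's original treatment \cite{Sa}[Lemma 5.1.4] this is accomplished by lifting the whole situation to the category of filtered $\cD$-modules and invoking strictness of the weight filtration; a purely topological rendering would instead realise perverse sheaves on $Z$ by gluing data along strata, with the vanishing-cycle splitting supplying exactly the datum needed to decouple strata of different codimension.
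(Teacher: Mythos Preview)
The paper does not prove this proposition at all: it is quoted verbatim from Saito \cite{Sa}[Lemma~5.1.4] (reformulated in \cite{Sa3}[(1.6)]) and used as a black box. So there is no ``paper's proof'' to compare against, and your attempt goes well beyond what the authors do.

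That said, your sketch of (b)$\Rightarrow$(a) is substantially overcomplicated, and the case analysis is mis-aimed. If $K=\cI\cC_{\overline{Y}}(L)$ has strict support $\overline{Y}$ and $g|_{\overline{Y}}\not\equiv 0$, then $K$ has neither a nonzero subobject nor a nonzero quotient supported on $g^{-1}(0)$; by the standard characterisation this forces $\Var$ to be injective and $\can$ to be surjective on the unipotent parts, so $\Ker(\Var)=0$ and $\im(\can)={^p}\phi_{g,1}(K)$, and the splitting is trivial. If instead $g|_{\overline{Y}}\equiv 0$, then $K$ is supported in $g^{-1}(0)$, hence ${^p}\psi_{g}(K)=0$; thus $\can=0$, $\Var=0$, and the splitting reads ${^p}\phi_{g,1}(K)={^p}\phi_{g,1}(K)\oplus 0$. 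No Milnor-fibration unwinding, no monodromy filtration, and no polarisability of Hodge modules are needed here; invoking them obscures that this implication is essentially formal.

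For (a)$\Rightarrow$(b) your inductive strategy is in the right spirit, and you correctly flag the real difficulty: one must pass from splittings of ${^p}\phi_{g,1}(K)$ for \emph{each} local $g$ to a global decomposition of $K$ itself. The cleanest topological mechanism for this is Beilinson's gluing: a perverse sheaf $K$ is equivalent to the datum $(j^*K,\,{^p}\phi_{g,1}(K),\,\can,\,\Var)$, and the hypothesised splitting of ${^p}\phi_{g,1}(K)$ into $\Ker(\Var)\oplus\im(\can)$ is exactly what decomposes this gluing datum into an intermediate-extension piece plus a piece supported on $g^{-1}(0)$. Your phrasing ``choose a $g$ whose zero set contains $W$ but not all of $Y$'' is fine locally, but as you note one then has to check independence of the resulting summand from the choice of $g$; this is where Saito's original argument in \cite{Sa} resorts to the filtered $\cD$-module side and strictness, and a purely sheaf-theoretic version would require the gluing formalism you allude to in your final paragraph. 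As written, that step remains a gap.
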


We can now state one of the key technical results needed in the proof of our main theorem:
 \begin{lem}\label{lemnv} Let $\pi:\wti{X}\ra S$ be a projective morphism from an $(n+1)$-dimensional complex manifold  onto a small disc $S$ around the origin in $\bC$. Then, for any $i \in \bZ$, the restriction of the $\bQ$-vector space homomorphism $$\var_{\bQ}: \bH^i(X;\phi_\pi \bQ_{\wti{X}}) \to  \bH^i(X;\psi_\pi \bQ_{\wti{X}})$$ (induced by the variation morphism $\var:\phi_\pi \to \psi_\pi$) on the image of the endomorphism $T-1$ acting on $\bH^i(X;\phi_\pi \bQ_{\wti{X}})$ is one-to-one.
 \end{lem}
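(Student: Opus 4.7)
The plan is to combine the composition identities $\can\circ\var = T-1$ on $\phi_\pi$ and $\var\circ\can = T-1$ on $\psi_\pi$ with the local invariant cycle theorem.

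First I would exploit the decomposition \eqref{678}. On the summand $\phi_{\pi,\neq 1}$, the variation morphism $\var$ is an isomorphism of perverse sheaves (as recalled immediately after \eqref{678}), so $\var_*$ is bijective there and \emph{a fortiori} injective on the image of $T-1$. It thus suffices to treat the $1$-part.

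On the $1$-part, suppose $x = (T-1)y$ satisfies $\var_*(x) = 0$. By the monodromy-equivariance of $\var$,
\[
0 \;=\; \var_*((T-1)y) \;=\; (T-1)\,\var_*(y),
\]
so $\var_*(y)$ lies in $\ker(T-1)\subseteq\bH^i(X;\psi_\pi\bQ_{\wti X})$. Under the identification $\bH^i(X;\psi_\pi\bQ_{\wti X}) \cong H^i(X_s;\bQ)$ from \eqref{one}, the morphism $\sp$ of \eqref{eqK} is the classical specialization map, and Deligne's \emph{local invariant cycle theorem}---applicable here because $\pi$ is projective and $\wti X$ is smooth---yields
\[
\im\bigl(\sp_*\colon H^i(X;\bQ)\to H^i(X_s;\bQ)\bigr) \;=\; H^i(X_s;\bQ)^T \;=\; \ker(T-1).
\]
Hence $\var_*(y)\in\im(\sp_*)$. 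The hypercohomology long exact sequence of the canonical triangle \eqref{eqK} identifies $\im(\sp_*)$ with $\ker(\can_*)$, so $\can_*(\var_*(y)) = 0$. Applying the identity $\can\circ\var = T-1$ on $\phi_\pi$ at the level of hypercohomology, this reads $(T-1)y = x = 0$, as required.

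The only nontrivial input is the local invariant cycle theorem in the present generality (projective $\pi$ from a smooth total space, with possibly singular $X=\pi^{-1}(0)$); once granted, the remainder is a routine diagram chase using \eqref{eqK}, \eqref{varnew}, and the monodromy-equivariance of $\var$. Within the mixed Hodge module formalism of this paper, the same invariant-cycle identity $\ker(T-1)=\im(\sp_*)$ can equivalently be deduced from strictness of the weight filtration together with Saito's purity and semisimplicity results for $R\pi_*\bQ^H_{\wti X}[n+1]$.
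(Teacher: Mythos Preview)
Your argument is correct and constitutes a genuinely different, more direct route than the paper's. The paper first pushes everything down to the disc $S$ via proper base change, identifying $\bH^i(X;\phi_\pi\bQ_{\wti X})$ with ${}^p\phi_s({}^p\cH^{i-n}(R\pi_*\bQ_{\wti X}[n+1]))$, then invokes Saito's decomposition theorem together with the semisimplicity criterion of Proposition~\ref{ss} to obtain the splitting ${}^p\phi_{s,1}=\Ker(\Var)\oplus\im(\can)$, from which $\im(N)\cap\Ker(\Var)=\{0\}$ follows since $N=\can\circ\Var$. You instead stay on $X$ and run a short diagram chase: from $\var_*((T-1)y)=0$ and $T$-equivariance of $\var$ you get $\var_*(y)\in\ker(T-1)$, then the local invariant cycle theorem gives $\ker(T-1)=\im(\sp_*)=\ker(\can_*)$, whence $(T-1)y=\can_*\var_*(y)=0$.

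The two proofs are not independent at the foundational level: the local invariant cycle theorem in the required generality is itself a consequence of the decomposition theorem (or of Saito's theory), as you note in your final paragraph. What your approach buys is transparency and economy---no passage to the disc, no explicit use of Proposition~\ref{ss}, and the reduction to the $1$-eigenspace is in fact optional (your chase works verbatim on all of $\bH^i(X;\phi_\pi\bQ_{\wti X})$). What the paper's approach buys is that it makes the splitting ${}^p\phi_{s,1}=\Ker(\Var)\oplus\im(\can)$ explicit as a structural fact about perverse sheaves on $S$, which is of independent interest and closer in spirit to the mixed Hodge module framework used elsewhere in the paper.
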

 
 \begin{proof} Let $\widehat{\pi}:X \to \{0\}$ be the restriction of $\pi$ to its zero-fiber, and denote by $s$ the coordinate function on the disc $S\subset \bC$ (with $s \circ \pi = \pi$). Then we have
 \begin{equation}
 \begin{split}
 \bH^i(X;\phi_\pi \bQ_{\wti{X}}) & \simeq \bH^{i-n}(X;{^p}\phi_\pi \bQ_{\wti{X}}[n+1]) \\ & \simeq 
 H^{i-n}(R\widehat{\pi}_*({^p}\phi_\pi\bQ_{\wti{X}}[n+1])) \\
& \simeq  {^p}\cH^{i-n}(R\widehat{\pi}_*({^p}\phi_\pi\bQ_{\wti{X}}[n+1])) \\ 
& \simeq  {^p}\cH^{i-n}(R\widehat{\pi}_*({^p}\phi_{s \circ \pi}\bQ_{\wti{X}}[n+1])) \\
& \simeq {^p}\phi_s( {^p}\cH^{i-n}(R\pi_*\bQ_{\wti{X}}[n+1]))
 \end{split}
 \end{equation}
 where ${^p}\cH$ denotes the perverse cohomology functor, and 
 the last identity follows by proper base-change (see \cite{Sa2}[Theorem 2.14]). Similarly, we have 
 $$\bH^i(X;\psi_\pi \bQ_{\wti{X}}) \simeq {^p}\psi_s( {^p}\cH^{i-n}(R\pi_*\bQ_{\wti{X}}[n+1])),$$ and these isomorphisms are compatible with the monodromy actions and they commute with the morphisms $\can$ and $\var$. 
 
Therefore, it suffices to prove the claim for the morphisms $\var$ and $T-1$ acting on $${^p}\phi_s( {^p}\cH^{j}(R\pi_*\bQ_{\wti{X}}[n+1])), \ \ j \in \bZ.$$
 Moreover, since $\var$ is an isomorphism on ${^p}\phi_{s,\neq 1}$, we can replace ${^p}\phi_s( {^p}\cH^{j}(R\pi_*\bQ_{\wti{X}}[n+1]))$ by ${^p}\phi_{s,1}( {^p}\cH^{j}(R\pi_*\bQ_{\wti{X}}[n+1]))$. But on ${^p}\phi_{s,1}( {^p}\cH^{j}(R\pi_*\bQ_{\wti{X}}[n+1]))$ we have that $T_s=1$  and $T=T_u$. 
Thus on this eigenspace, with $T_n$ the nilpotent morphism $T_n = 1-T_u,$
\[ N = \log T = \log T_u = -\sum_{k=1}^\infty \frac{1}{k}T_n^k =
  (T-1)(1+T_N), \]
where $T_N = \sum_{k=1}^\infty \frac{1}{k+1} T_n^k$ is nilpotent.
In particular, $1+T_N$ is an automorphism.
Hence the endomorphisms $N$ and $T-1$ acting on ${^p}\phi_{s,1}( {^p}\cH^{j}(R\pi_*\bQ_{\wti{X}}[n+1]))$ have the same image. Similarly, the morphisms $\var$ and $\Var$ also differ by an automorphism on ${^p}\phi_{s,1}( {^p}\cH^{j}(R\pi_*\bQ_{\wti{X}}[n+1]))$, so in particular they have the same kernel. Hence we can further replace $T-1$ by $N$, and $\var$ by $\Var$.
 
Next note that by the decomposition theorem of \cite{Sa}, the perverse sheaf ${^p}\cH^{j}(R\pi_*\bQ_{\wti{X}}[n+1])$ on $S$ admits a decomposition by strict support, i.e., it can be written canonically as a direct sum of twisted intersection cohomology complexes.
Therefore, by the semisimplicity criterion of Proposition \ref{ss}, applied to $S$ with the coordinate function $s$,  we have a splitting:
 $${^p}\phi_{s,1}( {^p}\cH^{j}(R\pi_*\bQ_{\wti{X}}[n+1]))=\Ker\left(\Var:{^p}\phi_{s,1} \to {^p}\psi_{s,1}\right) \oplus \im \left(\can:{^p}\psi_{s,1} \to {^p}\phi_{s,1}  \right).$$
 Moreover, since $\can \circ \Var=N$ on $\phi_{s,1}$, we have that $$\im(N) \subset \im \left(\can:{^p}\psi_{s,1} \to {^p}\phi_{s,1}  \right),$$ which by the above splitting is equivalent to 
 $$\im(N) \cap \Ker\left(\Var:{^p}\phi_{s,1} \to {^p}\psi_{s,1}\right) = \{ 0 \}.$$
 This finishes the proof of our claim.
 
  \end{proof}

\subsection{Zig-zags. Relation to perverse sheaves}\label{zz}
We will adapt the results of \cite{MV} to the situation we are interested in, namely, that of a $n$-dimensional complex algebraic variety with only one isolated singular point $x$. Let as above, denote by $X^{\circ}=X \setminus \{x\}$ the regular locus and $j :X^{\circ} \hookrightarrow X$ and $i : \{x\} \hookrightarrow X$
the open, respectively closed, embeddings. 
Throughout this paper, we will only consider complexes of sheaves (e.g., perverse sheaves) which are constructible with respect to the Whitney stratification of $X$ consisting of the two strata $X^{\circ}$ and $\{x\}$.

The {\it zig-zag category} $Z(X,x)$ is defined as follows. An object in $Z(X,x)$ consists of a tuple 
$(\cP,A,B,\alpha,\beta,\gamma)$, with $\cP \in Perv (X^{\circ})$
(hence $\cP=\cL[n]$, for $\cL$ a local system with finite dimensional stalks on $X^{\circ}$), and $A$ and $B$ fit into an exact sequence of vector spaces
\be
\begin{CD}
H^{-1}(i^*Rj_*\cP) @>\alpha>> A @>\beta>> B @>\gamma>> H^{0}(i^*Rj_*\cP). 
\end{CD}
\ee
A morphism in $Z(X,x)$ between two zig-zags $(\cP,A,B,\alpha,\beta,\gamma)$ and $(\cP',A',B',\alpha',\beta',\gamma')$ consists of a morphism $p:\cP \to \cP'$ in $Perv (X^{\circ})$, and vector space homomorphisms $A \to A'$, $B \to B'$, together with a commutative diagram:
\be
\begin{CD}
H^{-1}(i^*Rj_*\cP) @>\alpha>> A @>\beta>> B @>\gamma>> H^{0}(i^*Rj_*\cP)\\ 
@VV{p_*}V @VVV @VVV @VV{p_*}V \\
H^{-1}(i^*Rj_*\cP') @>\alpha'>> A' @>\beta'>> B' @>\gamma'>> H^{0}(i^*Rj_*\cP').
\end{CD}
\ee
The {\it zig-zag functor} $\cZ:Perv(X) \to Z(X,x)$ is defined by sending an object $K \in Perv(X)$ to the triple $(j^*K, H^0(i^!K), H^0(i^*K))$, together with the exact sequence:
\be\label{100}
\begin{CD}
H^{-1}(i^*Rj_*j^*K) @>>> H^0(i^!K) @>>> H^0(i^*K) @>>> H^{0}(i^*Rj_*j^*K). 
\end{CD}
\ee
A morphism $\kappa:K\to K'$ in $Perv(X)$ induces a morphism $\cZ (\kappa): \cZ(K)\to \cZ(K')$
given by applying the functors $H^* (i^* Rj_* j^* -),$ $H^0 (i^! -)$ and $H^0 (i^* -)$ to $\kappa$
to get the vertical maps of
\[ \begin{CD}
H^{-1}(i^*Rj_* j^* K) @>\alpha>> H^0(i^!K) @>\beta>> H^0(i^*K) @>\gamma>> H^{0}(i^*Rj_* j^* K)\\ 
@VVV @VVV @VVV @VVV \\
H^{-1}(i^*Rj_* j^* K') @>\alpha'>> H^0(i^!K') @>\beta'>> H^0(i^*K') @>\gamma'>> H^{0}(i^*Rj_* j^* K').
\end{CD} \]

\br\label{rem1}
The exact sequence (\ref{100}) is part of the cohomology  long exact sequence corresponding to the distinguished triangle \[ i^! K \to i^*K \to i^*Rj_*j^*K \overset{[1]}{\to} \ , \]  obtained by applying the functor $i^*$ to the attaching triangle: \[ i_!i^! K \to K \to Rj_*j^*K \overset{[1]}{\to} \ .\]
\er
Perverse sheaves and zig-zags are related by the following result of MacPherson and Vilonen:
\bt {\rm(\cite{MV}[Thm.2.1,Cor.2.2])} \label{MV}
\bn 
\item[(a)] The zig-zag functor $\cZ:Perv(X) \to Z(X,x)$ gives rise to a bijection between isomorphism classes of objects of $Perv(X)$ to the isomorphism classes of objects of $Z(X,x)$.
\item[(b)] For any two objects $K$ and $K'$ in $Perv(X)$, with $\beta$ and $\beta'$ denoting the maps $H^0(i^!K) \overset{\beta}{\to} H^0(i^*K)$ and $H^0(i^!K') \overset{\beta'}{\to} H^0(i^*K')$, 
there is an exact sequence:
\be\label{imp}
0 \to \Hom(\Cok(\beta), \Ker(\beta')) \to  \Hom(K,K') \overset{\cZ}{\to}  \Hom(\cZ K,\cZ K') \to 0.
\ee
In particular, if either $\beta$ or $\beta'$ is an isomorphism, then $\cZ$ induces an isomorphism
$$\Hom(K,K') \cong  \Hom(\cZ K,\cZ K').$$
\en
\et
 The following example is of interest to us:
\bex\label{zzn} ({\it Nearby cycles})\newline
Let us consider the situation described in Section \ref{nv}, i.e., a family of projective $n$-dimensional hypersurfaces $\pi:\wti{X} \to S$ with zero fiber $X:=\pi^{-1}(0)$ with only one isolated singularity $x$. Denote by $F$ the (closed) Milnor fiber, and by $L=\partial F$ the corresponding link at $x$. The zig-zag associated to the perverse sheaf $\psi_{\pi}(\bQ_{\wti{X}}[n]) \in Perv(X)$ consists of the triple $(\bQ_{X^{\circ}}[n],H^n(F,L;\bQ),H^n(F;\bQ))$, together with the exact sequence
\be\label{zzne}
H^{n-1}(L;\bQ) \to H^n(F,L;\bQ) \to H^n(F;\bQ) \to H^n(L;\bQ),
\ee
which is in fact the relevant part of the cohomology long exact sequence for the pair $(F,L)$.
Indeed, we have the following identifications: 
\bn
\item[] $H^k(i^*Rj_*j^*\psi_{\pi}(\bQ_{\wti{X}}[n])) \simeq H^k(i^*Rj_*\bQ_{X^{\circ}}[n]) \simeq H^{k+n}(i^*Rj_*\bQ_{X^{\circ}}) \simeq H^{k+n}(L;\bQ)$,
\item[] $H^0(i^*\psi_{\pi}(\bQ_{\wti{X}}[n])) \simeq H^n(i^*\psi_{\pi}\bQ_{\wti{X}}) \simeq H^n(F;\bQ)$,
\item[] $H^0(i^!\psi_{\pi}(\bQ_{\wti{X}}[n])) \simeq H^n_c(F^{\circ};\bQ) \simeq H^n(F,L;\bQ)$,
\en
with $F^{\circ}=F \setminus L$ denoting as before the open Milnor fiber. Finally, note that if $n \geq 2$, by using the fact that $F$ is $(n-1)$-connected we deduce that the leftmost arrow in (\ref{zzne}) is injective, while the rightmost arrow is surjective.
\eex


\section{Main results}\label{main}

In this section we define the intersection-space complex and study its properties.
\subsection{Construction}\label{cons}
Let $X$ be a complex projective  hypersurface of dimension $n> 2$ with only one isolated singular point $x$. Then the link $L$ of $x$ is simply-connected and the intersection space $IX$ is defined as in \cite{Ba}.
In this section, we construct a  perverse sheaf on $X$, which we call {\it the intersection-space complex} and denote it $\cI\cS_X$, such that \be\label{two} \dim \bH^i(X;\cI\cS_X[-n])=\dim HI^i(X;\bQ) \ee
for all $i$, except at $i=2n$.

As in Section \ref{nv}, let $\pi:\wti{X}\ra S$ be a deformation of $X$, where $S$ is a small disc around the origin in $\bC$, the total space $\wti{X}$ is smooth, and the fibers $X_s$ for $s\ne 0$ are smooth projective hypersurfaces in $\bP^{n+1}$. Define $\cC$ to be the image in the abelian category $Perv (X)$ of the morphism of perverse sheaves
\be
T-1 : \phi_\pi \bQ_{\wti{X}}[n]\lra \phi_\pi \bQ_{\wti{X}}[n], 
\ee
with $\phi_\pi$ the vanishing cycle functor for $\pi$. 
So we have a monomorphism of perverse sheaves
\be\label{mono}
\cC\hookrightarrow \phi_\pi \bQ_{\wti{X}}[n] .
\ee
By (\ref{eqSupp}), both perverse sheaves $\cC$ and $\phi_\pi \bQ_{\wti{X}}[n]$ are supported only on the singular point $x$.  Composing (\ref{mono}) with the {variation morphism} \be \var: \phi_\pi \bQ_{\wti{X}}[n]\lra \psi_\pi \bQ_{\wti{X}}[n],\ee we obtain a morphism of perverse sheaves
\begin{equation}\label{eqGamma}
\iota:  \cC \lra  \psi_\pi \bQ_{\wti{X}}[n] .
\end{equation}
Thus we can make the following definition:

\begin{df} The {\it intersection-space complex} of $X$ is defined as
\be
\cI\cS_X := \Cok (\iota: \cC \lra  \psi_\pi \bQ_{\wti{X}}[n])\ \in\ Perv(X).
\ee
\end{df}

It is clear from the definition that $\cI\cS_X$ is a perverse sheaf. In the next section, we show that $\cI\cS_X$ satisfies the identity (\ref{two}) on Betti numbers. The latter fact also motivates the terminology. Moreover, for certain types of singularities (e.g., weighted homogeneous), $\cI\cS_X$ is self-dual and it carries a decomposition similar to the celebrated BBD decomposition theorem \cite{BBD}.

\subsection{Main theorem}\label{mt}
The main result of this paper is the following:

\begin{thm}\label{thmIS} (a) The intersection-space complex $\cI\cS_X$ recovers the intersection-space cohomology. More precisely, there are abstract isomorphisms
\begin{align}
\bH^i(X;\cI\cS_X[-n])\simeq \left\{
\begin{array}{cc}
HI^i(X;\bQ) & \text{ if }i\ne 2n\\
H^{2n}(X_s;\bQ)=\bQ & \text{ if }i=2n.
\end{array}\right.
\end{align}

\noindent
(b) The hypercohomology groups $\bH^r(X;\cI\cS_X)$ carry natural mixed Hodge structures.

\bigskip

Moreover, if the local monodromy $T_x$ at $x$ is semi-simple in the eigenvalue $1$, then:

\noindent
(c) There is a canonical splitting 
\be
\psi_\pi\bQ_{\wti{X}}[n] \simeq \cI\cS_X \oplus \cC  .
\ee

\noindent
(d)  The intersection-space complex is self-dual. In particular, there is a non-degenerate pairing
\be
\bH^{-i}(X;\cI\cS_X) \times \bH^i(X;\cI\cS_X)\ra \bQ .
\ee

\end{thm}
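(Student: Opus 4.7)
The strategy is to establish the short exact sequence of perverse sheaves
\[ 0 \lra \cC \overset{\iota}{\lra} \psi_\pi \bQ_{\wti X}[n] \lra \cI\cS_X \lra 0 \]
and to extract all four parts from it. For (a), I would first check injectivity of $\iota$ in $Perv(X)$: since $\cC$ is supported only at $\{x\}$ (as a subsheaf of the skyscraper $\phi_\pi\bQ_{\wti X}[n]$), its kernel is again a skyscraper, and vanishes iff the induced map on $\bH^n(X;\,\cdot\,)$ is injective on $\im(T_x-1)\subseteq H^n(F_x;\bQ)$; this is precisely Lemma \ref{lemnv} applied to our family $\pi$. Feeding the sequence into the hypercohomology long exact sequence, using (\ref{one}) to identify $\bH^{i-n}(X;\psi_\pi\bQ_{\wti X}[n])=H^i(X_s;\bQ)$ and the vanishing $\bH^j(X;\cC)=0$ for $j\neq 0$, one reads off
\[
\bH^i(X;\cI\cS_X[-n]) \simeq \begin{cases} H^i(X_s;\bQ), & i\neq n, \\ H^n(X_s;\bQ)/\im(T_x-1), & i=n, \end{cases}
\]
which, compared with Theorem \ref{thmBM}, gives the statement of (a) (the $i=2n$ value comes out as $H^{2n}(X_s;\bQ)=\bQ$, while $HI^{2n}=0$, which is why that case is recorded separately in the theorem).

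For (b), every ingredient of the construction lifts to Saito's category $\MHM(X)$: $\bQ_{\wti X}[n+1]$ underlies a pure Hodge module, the nearby and vanishing cycle functors admit lifts $\psi_\pi^H$, $\phi_\pi^H$, the monodromy $T$ and the variation $\var$ are morphisms of mixed Hodge modules, and image/cokernel exist in the abelian category $\MHM(X)$. Hence $\cI\cS_X$ underlies a mixed Hodge module, and by Saito its hypercohomology carries a canonical mixed Hodge structure.

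For (c) and (d), the hypothesis that $T_x$ is semisimple in the eigenvalue $1$ forces $T=1$ on the generalized $1$-eigenspace, so $T-1$ vanishes on $\phi_{\pi,1}\bQ_{\wti X}[n]$. Combined with the fact that $T-1$ is an automorphism of $\phi_{\pi,\neq 1}\bQ_{\wti X}[n]$, this gives the key identification $\cC=\phi_{\pi,\neq 1}\bQ_{\wti X}[n]$. Since $\var$ restricts to an isomorphism $\phi_{\pi,\neq 1}\bQ_{\wti X}[n]\simeq \psi_{\pi,\neq 1}\bQ_{\wti X}[n]$, the morphism $\iota$ identifies $\cC$ with the direct summand $\psi_{\pi,\neq 1}\bQ_{\wti X}[n]$ of $\psi_\pi\bQ_{\wti X}[n]$; the canonical monodromy decomposition (\ref{678}) then yields $\psi_\pi\bQ_{\wti X}[n]\simeq \cI\cS_X\oplus \cC$ with $\cI\cS_X\simeq \psi_{\pi,1}\bQ_{\wti X}[n]$. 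Self-duality of $\cI\cS_X$ now follows from three standard facts: $\bQ_{\wti X}[n+1]$ is Verdier self-dual on the smooth $\wti X$, the perverse nearby cycle functor ${^p}\psi_\pi$ commutes with $\cD$, and Verdier duality exchanges the eigenvalues of $T$ with their inverses, hence preserves the $1$-generalized-eigenspace summand. Poincar\'e duality on the compact variety $X$ then yields the pairing $\bH^{-i}(X;\cI\cS_X)\times \bH^i(X;\cI\cS_X)\to \bQ$.

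The main technical obstacle is Lemma \ref{lemnv}, whose proof already invokes Saito's decomposition theorem in a family together with the semisimplicity criterion of Proposition \ref{ss}; beyond that, the only genuinely new step is the identification $\cC=\phi_{\pi,\neq 1}\bQ_{\wti X}[n]$ under the hypothesis of (c)--(d), with the remaining ingredients being formal manipulations of nearby and vanishing cycles and their mixed Hodge module lifts.
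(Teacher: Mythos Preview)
Your arguments for parts (a), (c), and (d) are essentially those of the paper. The one variation is in (a): you deduce injectivity of $\iota$ in $Perv(X)$ from Lemma~\ref{lemnv} (observing that $\ker\iota$, being a skyscraper, vanishes once the induced map on global $\bH^0$ is injective), whereas the paper first cites Massey's result that $\var$ is a monomorphism whenever $\wti X$ is smooth, and only then invokes Lemma~\ref{lemnv} for the hypercohomology calculation. Your route is legitimate and slightly more economical, though the ``iff'' you state is only needed (and only true) in the direction you use.

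There is, however, a genuine gap in your treatment of (b). You assert that ``the monodromy $T$ and the variation $\var$ are morphisms of mixed Hodge modules,'' but this is not so: in Saito's formalism the objects that lift to $\MHM$ are the nilpotent endomorphism $N=\log T_u$ (as a morphism into a Tate twist) and the morphism $\Var$, not $T$ or $\var$. Since $T_u=\exp(N)$ would require summing morphisms into different Tate twists, neither $T$ nor $T-1$ is a morphism of mixed Hodge modules, and $\var$ (which differs from $\Var$ by an automorphism built out of $T$) does not lift either. The paper's proof of (b) therefore first rewrites
\[
\cI\cS_X \;=\; \coker\Bigl(\im(N)\ \overset{\Var}{\longrightarrow}\ {}^{p}\psi_{\pi,1}\bQ_{\wti X}[n+1]\Bigr),
\]
using that $T-1$ is invertible on the non-unipotent vanishing cycles and that on the unipotent part $T-1$ and $N$ (respectively $\var$ and $\Var$) differ by automorphisms. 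Only after this rewriting do all objects and arrows lift to $\MHM(X)$, allowing one to conclude that $\cI\cS_X$ underlies a mixed Hodge module. Without this step your argument for (b) does not go through.
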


Before proving the theorem, let us note the following:  
\begin{rem}\label{rem5}
\noindent{(i)} If $\pi$ is a {\it small deformation} of $X$, i.e., if the local monodromy operator $T_x$ is trivial, then $\cC \simeq 0$, so we get an isomorphism of perverse sheaves $\cI\cS_X \simeq \psi_\pi \bQ_{\wti{X}}[n]$. In view of the Betti identity (\ref{two}), this isomorphism can be interpreted as a sheaf-theoretical enhancement of the stability result from \cite{BM} mentioned in the introduction.

\noindent{(ii)} The above construction can be easily adapted to the situation of hypersurfaces with multiple isolated singular points. It then follows from $(a)$ and 
\cite{Ba}[Prop.3.6]
that the hypercohomology of $\cI\cS_X$ for conifolds $X$ provides the correct count of massless $3$-branes in type IIB string theory.

\noindent{(iii)} Examples of isolated hypersurface singularities whose monodromy is semi-simple in the eigenvalue $1$ include those for which the monodromy is semi-simple, e.g., weighted homogeneous singularities.

\noindent{(iv)} The splitting $\psi_\pi\bQ_{\wti{X}}[n] \simeq \cI\cS_X \oplus \cC$ for the deformation $\pi:{\wti{X}} \ra S$ of $X$ from  Theorem \ref{thmIS} should be viewed as ``mirroring" the splitting \be Rf_* \bQ_{\wti{X}}[n]=\cI\cC_X\oplus \ \{\rm contributions \ from \ singularities \} \ee for $f:\wti{X}\ra X$ a resolution of singularities, see \cite{BBD,DM, GM3}. (Recall that the perverse sheaf $\cC$ is supported only on the singular point $x$ of $X$.) This analogy is motivated by the fact that  if $\sp:X_s \to X$ denotes the specialization map (e.g., see \cite{BM} for its construction), then by using a resolution of singularities it can be shown that \be\psi_\pi\bQ_{\wti{X}} =R{\sp}_*\bQ_{X_s}.\ee 
Thus, under our assumptions, we get a splitting: \be R{\sp}_*\bQ_{X_s}[n] \simeq \cI\cS_X \oplus \cC.\ee
Note, however, that the specialization map is only continuous, as opposed to a resolution map, which is algebraic.
Therefore, Theorem \ref{thmIS} provides a validation of the idea that the ``mirror" of the intersection cohomology complex $\cI\cC_X$ is the intersection-space complex $\cI\cS_X$, at least in the case of projective hypersurfaces with an isolated singular point. 
\end{rem}

\begin{proof} {\it of Theorem \ref{thmIS}.}

{\it{(a)}} \ First, since $\wti{X}$ is a  
manifold, we have that
$$
\var : \phi_\pi\bQ_{\wti{X}}[n]\lra \psi_\pi\bQ_{\wti{X}}[n]
$$
is an injection in $Perv(X)$, see the proof of \cite{Ma2}[Lem.3.1]. Thus, by the definition of $\iota$, there is a short exact sequence in the abelian category $Perv (X)$
\begin{equation}\label{eqSES}
0\lra \cC\mathop{\lra}^{\iota} \psi_\pi\bQ_{\wti{X}}[n]\lra \cI\cS_X\mathop{\lra} 0 .
\end{equation}
As $Perv(X)$ is the heart of a t-structure on $D^b_c(X)$, there exists,
according for example to \cite[Remark 7.1.13]{BaTISS},
a unique morphism
$\cI\cS_X \to \cC [1]$ such that
\[ \cC \longrightarrow \psi_\pi \bQ_{\wti{X}}[n] \longrightarrow \cI\cS_X \longrightarrow \cC [1] \]
is a distinguished triangle in $D^b_c (X)$.
Consider its long exact sequence of hypercohomology groups,
\[ \cdots \to \bH^{i} (X;\cC) \to \bH^{i} (X;\psi_\pi \bQ_{\wti{X}} [n]) \to
  \bH^{i} (X;\cI\cS_X) \to
  \bH^{i+1} (X;\cC) 
  \to \cdots \]

By construction (but see also the proof of Lemma \ref{calc} below), we have 
\be
\bH^i(X;\cC)=\cH^i({\cC})_x=
\begin{cases}
0 \ \ \ \ \ \ \ \ \ \ \ \ \ \ \ \ \ \  , \ {\rm if} \ i\ne 0, \\ 
\im (T_x-1) \ , \ {\rm if} \ i=0. 
\end{cases}
\ee
Then, by Theorem \ref{thmBM}, the only thing left to prove for part (a) is the injectivity of 
$$
\cH^0({\cC})_x=\im (T_x-1) \lra \bH^0(X;\psi_\pi\bQ_{\wti{X}}[n])=H^n(X_s;\bQ),
$$
that is, that the variation of a vanishing cocycle, modulo monodromy-invariant ones, is nonzero. This follows from Lemma \ref{lemnv}.

{\it{(b)}} \ Since, by definition, $T-1$ is invertible on the non-unipotent vanishing cycles ${^p}{\phi_{\pi,\neq 1}}$, it follows by the considerations of Section \ref{nv} that 
\be\label{aaa}
\cI\cS_X=\coker\left(\im(N) \overset{\Var}{\lra} {^p}{\psi_{\pi,1}}\bQ_{\wti{X}}[n+1] \right),
\ee
where we use the fact discussed in Section \ref{nv} that on ${^p}{\phi_{\pi,1}}$ the morphisms $T-1$ and $N$ differ by an automorphism, and similarly for $\var$ and $\Var$. 
Therefore, as all objects (and arrows) on the right-hand side of (\ref{aaa}) lift to similar objects (and arrows) in the abelian category $\MHM(X)$ of mixed Hodge modules on $X$, it follows that 
$\cI\cS_X$ underlies (under the forgetful functor $\rat$) a mixed Hodge module $\cI\cS^H_X$ on $X$ defined by
$$\cI\cS^H_X:=\coker\left(\im(N) \overset{\Var}{\lra} {\psi^H_{\pi,1}}\bQ_{\wti{X}}[n+1] \right) \in \MHM(X).$$ (Here we use the notations introduced in Section \ref{nv}.)
In particular, the hypercohomology groups of $\cI\cS_X$ carry canonical mixed Hodge structures.

{\it{(c)}} \ Since $\phi_{\pi}(\bQ_{\wti{X}}[n])$ is supported only on the singular point $\{x\}$, we have  (e.g., as in the proof of Lemma \ref{calc} below) that $\phi_{\pi}(\bQ_{\wti{X}}[n])\simeq i_!i^*\phi_{\pi}(\bQ_{\wti{X}}[n])$. Moreover, the (co)support conditions imply that 
$i^*\phi_{\pi}(\bQ_{\wti{X}}[n]) \simeq H^0(i^*\phi_{\pi}(\bQ_{\wti{X}}[n]))$. So, in view of the isomorphism (\ref{456}), it follows that our assumption on the local monodromy $T_x$ implies that $T-1=0$ on $\phi_{\pi,1}(\bQ_{\wti{X}}[n])$. On the other hand, $T-1$ is an isomorphism on $\phi_{\pi,\neq 1}(\bQ_{\wti{X}}[n])$. Therefore, we get the following:
\be \cC:=\im (T-1 : \phi_\pi \bQ_{\wti{X}}[n]\lra \phi_\pi \bQ_{\wti{X}}[n])=\phi_{\pi,\neq 1}(\bQ_{\wti{X}}[n]).
\ee
Moreover, since the variation morphism $\var$ is an isomorphism on $\phi_{\pi,\neq 1}(\bQ_{\wti{X}}[n])$, it follows that the morphism of perverse sheaves $\cC \to \psi_\pi \bQ_{\wti{X}}[n]$ can be identified with the canonical (split) inclusion $\psi_{\pi,\neq 1} \bQ_{\wti{X}}[n] \hookrightarrow \psi_\pi \bQ_{\wti{X}}[n]$.
Altogether, we obtain that 
\be \cI\cS_X \simeq \psi_{\pi, 1} \bQ_{\wti{X}}[n] , \ee
and the claimed splitting is just the canonical one from (\ref{678}).

{\it{(d)}} \ As it follows from part ({c}), our assumption on the local monodromy operator $T_x$ yields that 
$\cI\cS_X \simeq \psi_{\pi, 1} \bQ_{\wti{X}}[n] $. Since the nearby complex $\psi_{\pi}(\bQ_{\wti{X}}[n])$ is self-dual (e.g., see \cite{Ma, Sa, Sa1}) and the Verdier duality functor respects the splitting (\ref{678}), it follows that $\cI\cS_X$ is also self-dual. 
From the self-duality of $\cI\cS_X$,  one obtains readily  a non-degenerate paring 
$$
\bH^{-i}(X;\cI\cS_X) \times \bH^i_c(X;\cI\cS_X)\ra \bQ .
$$
But since $X$ is compact, $\bH^i_c(X;\cI\cS_X)=\bH^i(X;\cI\cS_X)$. 

\end{proof}

\begin{rem} 
\noindent{(i)} The proof of part (a) shows in fact that for any $i \notin \{n, 2n\}$, we have isomorphisms
\[ \bH^i(X; \cI\cS_X[-n]) \simeq H^i(X_s;\bQ). \]
Then by Theorem \ref{thmBM}, for any $i \notin \{n, 2n\}$ we have (abstract) isomorphisms:
\[ H^i(X_s;\bQ) \simeq H^i(IX;\bQ). \]

\noindent{(ii)} A topological interpretation of the splitting and self-duality of parts ({c}) and (d) of the above theorem, will be given in Sections \ref{split} and \ref{sd}, respectively, by using the theory of zig-zags (as recalled in Section \ref{zz}).
\end{rem}

\br {\it Purity and Hard-Lefschetz}\newline
It follows from the proof of part $(b)$ of Theorem \ref{thmIS} that the intersection-space complex $\cI\cS_X$ is a perverse sheaf underlying the mixed Hodge module $\cI\cS^H_X$. While this mixed Hodge module is not in general pure, it is however possible for its hypercohomology $\bH^*(X;\cI\cS_X)$ to be pure and, moreover, to satisfy the Hard Lefschetz theorem. It is known that such statements are always true for the ``mirror theory", that is, intersection cohomology (e.g., see \cite{DM} and the references therein).
We give here a brief justification of these claims, by making use of results from the recent preprint \cite{DMSS}. Recall that, under the assumption of semisimplicity in the eigenvalue $1$ for the local monodromy operator $T_x$, we show in part $({c})$ of Theorem \ref{thmIS} that $ \cI\cS_X \simeq {^p}\psi_{\pi, 1} \bQ_{\wti{X}}[n+1]$. Next, following \cite{DMSS}[Thm.1.1], we note that the weight filtration of the mixed Hodge structure on $\bH^i({^p}\psi_{\pi, 1}):=\bH^i(X; {^p}\psi_{\pi, 1} \bQ_{\wti{X}}[n+1])$ is (up to a shift) the monodromy filtration of the nilpotent endomorphism $N$ acting on $\bH^i({^p}\psi_{\pi, 1})$. So $\bH^i({^p}\psi_{\pi, 1})$ or, under the local monodromy assumption, $\bH^i(X;\cI\cS_X)$, is a pure Hodge structure if and only if $N=0$ or, equivalently, the monodromy $T$ acting on $\bH^i(^p\psi_{\pi, 1})$ is semisimple. 
Note that by our calculations in Lemma \ref{lemnv}, this semisimplicity is equivalent to the condition that the monodromy $T$ acting on $^p\psi_{s,1}({^p\cH}^j(R\pi_*\bQ_{\wti{X}}[n+1]))$ is semisimple for all perverse sheaves ${^p\cH}^j(R\pi_*\bQ_{\wti{X}}[n+1])$ on the disc  $S$. Moreover, if this is the case, then one can show as in \cite{DMSS}[Thm.1.4] that the Hard Lefschetz theorem also holds for the hypercohomology groups $\bH^i(X;\cI\cS_X)$. To sum up, the above discussion implies that, if besides the ``local" semisimplicity assumption for the eigenvalue $1$ of $T_x$, we also require a ``global" semisimplicity property for the eigenvalue $1$ (i.e., the monodromy $T$ acting on $\bH^i(X;{^p}\psi_{\pi, 1} \bQ_{\wti{X}}[n+1])$ is semisimple), then the hypercohomology groups  $\bH^i(X;\cI\cS_X)$ carry pure Hodge structures satisfying the hard Lefschetz theorem.   
Finally, we should also mention that  the above technical assumption of ``global semisimplicity for the eigenvalue $1$" is satisfied if the following geometric condition holds: 
$\wti{X}$ carries a $\bC^*$-action so that $\pi:\wti{X} \to S$ is equivariant with respect to the weight $d$ action of $\bC^*$ on $S$ ($d>0$), see \cite{DMSS}[Sect.2] for details.
\er


\subsection{On the splitting of nearby cycles}\label{split}
In this section, we use the theory of ziz-zags to provide a topological interpretation of the splitting statement of part $({c})$ of our main Theorem \ref{thmIS}. More precisely, we will consider the zig-zags associated to the defining sequence for $\cI\cS_X$, 
\be\label{eight} 0\lra \cC \overset{\iota}{\lra} \psi_\pi\bQ_{\wti{X}}[n]\lra \cI\cS_X\mathop{\lra} 0, \ee
in order to show that this short exact sequence splits. 

Let us denote as before by $i:\{x\} \hookrightarrow X$ and $j:X^{\circ}:=X\setminus \{x\} \hookrightarrow X$ the closed and resp. open embeddings. 
Recall from Example \ref{zzn} that the zig-zag associated to the perverse sheaf $\psi_{\pi}(\bQ_{\wti{X}}[n])$ consists of the triple $(\bQ_{X^{\circ}}[n],H^n(F,L;\bQ),H^n(F;\bQ))$, together with the exact sequence
\be\label{zzne2}
H^{n-1}(L;\bQ) \lra H^n(F,L;\bQ) \lra H^n(F;\bQ) \lra H^n(L;\bQ).
\ee
Similarly, we have the following:
\begin{lem}\label{calc}
The zig-zag for $\cC$ consists of the triple $(0,A,B)$ together with the exact sequence
\be\label{zzne2}
0 \lra A \lra B \lra 0,
\ee
where 
\be\label{six} A=H^0(i^!\cC) \simeq \im\Big(T_x-1:H^n(F,L;\bQ) \to H^n(F,L;\bQ)\Big),\ee and 
\be\label{seven} B=H^0(i^*\cC) \simeq \im\Big(T_x-1:H^n(F;\bQ) \to H^n(F;\bQ)\Big).\ee
\end{lem}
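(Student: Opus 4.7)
The plan is to directly evaluate the three components of $\cZ(\cC) = (j^*\cC, H^0(i^!\cC), H^0(i^*\cC))$ from the description $\cC = \im(T-1 : \phi_\pi \bQ_{\wti{X}}[n] \to \phi_\pi \bQ_{\wti{X}}[n])$ in $Perv(X)$. Since $\phi_\pi \bQ_{\wti{X}}[n]$ is supported on $\{x\}$ by (\ref{eqSupp}) and $\cC$ is a subobject of it in $Perv(X)$, the perverse sheaf $\cC$ itself is supported on $\{x\}$, so $j^*\cC = 0$. This makes both outer terms $H^{\bullet}(i^*Rj_*j^*\cC)$ of the defining exact sequence (\ref{100}) vanish, leaving only $0 \to A \to B \to 0$.

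For (\ref{seven}), the (co)support conditions force any perverse sheaf on $X$ supported on the zero-dimensional stratum $\{x\}$ to be a skyscraper. Applied to $\phi_\pi \bQ_{\wti{X}}[n]$, and using (\ref{456}) together with $\wti{H}^n(F) = H^n(F)$ for $n \geq 1$, this gives $\phi_\pi \bQ_{\wti{X}}[n] \simeq i_* H^n(F;\bQ)$, with $T$ acting as $T_x$ on the stalk. Because $i^*$ is exact on such skyscrapers and commutes with the formation of images in $Perv(X)$, one obtains $B = H^0(i^*\cC) \simeq \im(T_x - 1 : H^n(F) \to H^n(F))$, which is (\ref{seven}).

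For (\ref{six}), the plan is to apply $i^!$ to the perverse short exact sequence $0 \to \cC \to \psi_\pi \bQ_{\wti{X}}[n] \to \cI\cS_X \to 0$, whose first map $\iota$ is injective by Lemma \ref{lemnv}. The perverse cosupport condition $\cH^{-1}(i^!\cI\cS_X) = 0$ produces an injection $A \hookrightarrow H^0(i^!\psi_\pi \bQ_{\wti{X}}[n]) \simeq H^n(F, L;\bQ)$, where the last identification is from Example \ref{zzn}. At the level of costalks in degree zero, the sheaf-level identities $\can \circ \var = T-1$ and $\var \circ \can = T-1$ translate, via the identifications of Example \ref{zzn}, into $c \circ (i^!\var) = T_x - 1$ on $H^n(F)$ and $(i^!\var) \circ c = T_x - 1$ on $H^n(F, L)$, where $c : H^n(F, L) \to H^n(F)$ is the natural map from the long exact sequence of the pair $(F, L)$. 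Consequently the image of $A$ inside $H^n(F, L)$ equals $(i^!\var)(\im(T_x - 1 : H^n(F)))$, while $\im(T_x - 1 : H^n(F, L)) = (i^!\var)(\im c)$.

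The main step, hence the main obstacle, is to verify the equality $\im(T_x - 1 : H^n(F)) = \im c$ inside $H^n(F)$. The inclusion $\subseteq$ is immediate from $T_x$-equivariance of the pair sequence combined with the triviality of $T_x$ on $H^\ast(L)$. For equality, the Wang sequence (\ref{wang}) of the Milnor fibration gives $\dim \ker(T_x - 1 : H^n(F)) = \dim H^n(S_\epsilon - L)$, while Alexander duality on the sphere $S_\epsilon$ combined with Poincar\'e duality on the oriented $(2n-1)$-manifold $L$ yields $\dim H^n(S_\epsilon - L) = \dim H^{n-1}(L) = \dim H^n(L)$; this matches the codimension of $\im c = \ker(H^n(F) \to H^n(L))$ inside $H^n(F)$, via the surjectivity of $H^n(F) \to H^n(L)$ from the pair sequence. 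Once this equality is in place, $(i^!\var)(\im(T_x - 1 : H^n(F))) = (i^!\var)(\im c) = \im(T_x - 1 : H^n(F, L))$, completing the identification of $A$ with $\im(T_x - 1 : H^n(F, L) \to H^n(F, L))$ and thus proving (\ref{six}).
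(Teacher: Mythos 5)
Your proposal is correct, and for the computation of $j^*\cC$ and of $B=H^0(i^*\cC)$ it coincides with the paper's argument (support of $\cC$ on $\{x\}$, plus $t$-exactness of $i^*$ on skyscrapers applied to the epi--mono factorization $\phi_\pi\bQ_{\wti X}[n]\twoheadrightarrow\cC\hookrightarrow\phi_\pi\bQ_{\wti X}[n]$ of $T-1$). For $A=H^0(i^!\cC)$, however, you take a genuinely different route. The paper simply applies $i^!$ to the same factorization and invokes the identification $H^0(i^!\phi_\pi(\bQ_{\wti X}[n]))\simeq H^n(F,L;\bQ)$ (which is realized by the variation isomorphism $i^!\var$, established in the lemma that follows), so that $A\simeq\im(T_x-1)$ on $H^n(F,L;\bQ)$ falls out in one line. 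You instead embed $A$ into $H^0(i^!\psi_\pi\bQ_{\wti X}[n])\simeq H^n(F,L;\bQ)$ via the short exact sequence defining $\cI\cS_X$ and the cosupport condition, and then must identify the image; this forces you to prove the topological equality $\im(T_x-1:H^n(F;\bQ)\to H^n(F;\bQ))=\im\bigl(H^n(F,L;\bQ)\to H^n(F;\bQ)\bigr)$, which you do correctly by the $T_x$-equivariance of the pair sequence together with a dimension count combining the Wang sequence, Alexander duality in $S_\epsilon$ and Poincar\'e duality on $L$. That equality is precisely the cohomological version of Siersma's result which the paper only needs (and cites) later, in the proof of Proposition \ref{13}; so your argument front-loads a nontrivial topological input that the paper's proof of this lemma avoids, in exchange for not having to verify that $i^!\var$ is an isomorphism on degree-zero costalks. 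Two small inaccuracies worth fixing: the injectivity of $\iota:\cC\to\psi_\pi\bQ_{\wti X}[n]$ in $Perv(X)$ comes from the injectivity of $\var:\phi_\pi\bQ_{\wti X}[n]\to\psi_\pi\bQ_{\wti X}[n]$ (Massey's result quoted in the proof of Theorem \ref{thmIS}(a)), not from Lemma \ref{lemnv}, which concerns hypercohomology; and the identification of $i^!\can$ with the natural map $H^n(F,L;\bQ)\to H^n(F;\bQ)$ of the pair deserves a word of justification (via the natural morphism $i^!\to i^*$ and the fact that $i^!\simeq i^*$ on the skyscraper $\phi_\pi\bQ_{\wti X}[n]$).
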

\begin{proof}
Since $j^* \cC=0,$ we have
\[ H^{-1} (i^* Rj_* j^* \cC)=0,~ H^{0} (i^* Rj_* j^* \cC)=0, \]
that is, the two outermost terms of the zig-zag exact sequence vanish.
Let us determine the inner terms $A$ and $B$.
As $\cC$ is supported on the singular point $x$, we first get that $j^*\cC \simeq j^!\cC \simeq 0$. By using the attaching triangles, we get that  \be \cC \simeq i_*i^*\cC \simeq  i_*i^!\cC.\ee So by applying $i^*$ and using $i^*i_* \simeq id$, we get that $i^*\cC \simeq i^!\cC$. Moreover, the (co)support conditions 
for $\cC$ imply that $i^*\cC \in Perv(\{x\})$, i.e., $H^i(i^*\cC)=0$ for all $i \neq 0$. Hence, there is a quasi-isomorphism $i^*\cC \simeq H^0(i^*\cC)$. Similar considerations apply to the perverse sheaf $\phi_{\pi}(\bQ_{\wti{X}}[n])$, which is also supported only on the singular point $x$. Recall now that $\cC$ is defined by the sequence of perverse sheaves supported on $\{x\}$:
\be\label{five}
\phi_{\pi}(\bQ_{\wti{X}}[n]) \overset{T-1}{\twoheadrightarrow} \cC \hookrightarrow \phi_{\pi}(\bQ_{\wti{X}}[n]).
\ee
As $i^*$ is always right $t$-exact and $i^!$ is left t-exact, it follows that 
on objects supported over the point $x$, $i^*$ is t-exact and thus
we get the following sequence of rational vector spaces
\be
H^0(i^*\phi_{\pi}(\bQ_{\wti{X}}[n])) \overset{T-1}{\twoheadrightarrow} H^0(i^*\cC) \hookrightarrow H^0(i^*\phi_{\pi}(\bQ_{\wti{X}}[n])).
\ee
In view of the identification
$H^0 (i^* \phi_\pi (\bQ_{\wti{X}}[n])) = H^n (F;\bQ)$, the above sequence  proves (\ref{seven}). 
Similarly, by applying $i^!$ to (\ref{five}), we obtain (\ref{six})
in view of the identification
$H^0 (i^! \phi_\pi (\bQ_{\wti{X}}[n])) = H^n (F,L;\bQ)$.

\end{proof}

Therefore, the map $\iota$ of (\ref{eight}) corresponds under the zig-zag functor to the map of zig-zags $\iota: \cZ({\cC}) \to \cZ(\psi_{\pi}(\bQ_{\wti{X}}[n]))$ given as:
\be\label{555}
\begin{CD}
0 @>>> A @>\beta'>> B @>>> 0 \\
@VVV @V{\iota_a} VV @V{\iota_b}VV @VVV \\
H^{n-1}(L;\bQ) @>\alpha>> H^n(F,L;\bQ) @>\beta>> H^n(F;\bQ) @>\gamma>> H^n(L;\bQ) ,
\end{CD}
\ee
with $A$ and $B$ defined as in (\ref{six}) and (\ref{seven}).

Assume from now on that the following condition is satisfied: 
\begin{ass}\label{ass}
The local monodromy $T_x$ at the singular point is semi-simple in the eigenvalue $1$.
\end{ass}

The following result will be needed in Proposition \ref{200} below.
\bl
Under the assumption \ref{ass}, the vertical homomorphisms $\iota_a$ and $\iota_b$ of the above diagram (\ref{555}) are injective.
\el

\begin{proof}
By the commutativity of the middle square of (\ref{555}), the injectivity of
$\iota_a$ will follow from the injectivity of $\iota_b$, since $\beta'$ is an
isomorphism. Nevertheless, we shall also establish the injectivity of $\iota_a$
directly, and then go on to prove that $\iota_b$ is injective.
From the computations above, it follows that  $\iota_a$ is the homomorphism 
\be \iota_a: H^0(i^!\cC) \to H^0(i^!\psi_{\pi}(\bQ_{\wti{X}}[n]))\ee
obtained by restricting the variation morphism 
\be\label{666} \var: H^0(i^!\phi_{\pi}(\bQ_{\wti{X}}[n])) \lra H^0(i^!\psi_{\pi}(\bQ_{\wti{X}}[n])) \ee
to the subspace $$H^0(i^!\cC)=\im\Big(T-1: H^0(i^!\phi_{\pi}(\bQ_{\wti{X}}[n])) \to H^0(i^!\phi_{\pi}(\bQ_{\wti{X}}[n])) \Big) \subset H^0(i^!\phi_{\pi}(\bQ_{\wti{X}}[n])).$$
We claim that (\ref{666}) is a vector space isomorphism, which in turn yields that $\iota_a$ is injective. 
To prove the claim, make $K=\bQ_{\wti{X}}[n]$ in  (\ref{varnew}). We get a distinguished triangle
\be\label{999} \phi_\pi \bQ_{\wti{X}}[n] \mathop{\lra}^{\var} \psi_\pi \bQ_{\wti{X}}[n] \lra t^! \bQ_{\wti{X}}[n+2] \mathop{\lra}^{[+1]} \ee
with  $t:X\hookrightarrow \wti{X}$ denoting the inclusion.  By applying the functor $i^!$ and taking the cohomology of the resulting triangle, we see that the variation morphism (\ref{666}) fits into an exact sequence:
\be\label{777} \cdots \lra H^{n+1}(i^! t^! \bQ_{\wti{X}}) \lra H^0(i^!\phi_{\pi}(\bQ_{\wti{X}}[n])) \mathop{\lra}^{\var} H^0(i^!\psi_{\pi}(\bQ_{\wti{X}}[n])) \lra H^{n+2}(i^! t^! \bQ_{\wti{X}}) \lra \cdots \ee
If we now let $e:=t \circ i :\{x\} \hookrightarrow \wti{X}$ be the inclusion of the point $x$ in the ambient space $\wti{X}$, then for any $k \in \bZ$ we have that
\be H^{k}(i^! t^! \bQ_{\wti{X}})=H^{k}(e^! \bQ_{\wti{X}}) \simeq H^k_c(B^{2n+2}_x;\bQ), \ee
for $B^{2n+2}_x$ a small (euclidian) ball around $x$ in $\wti{X}$. Finally, $H^k_c(B^{2n+2}_x;\bQ) = 0$ for all $k \neq 2n+2$, which by (\ref{777}) proves our claim.

Similarly,  $\iota_b$ is the homomorphism 
\be \iota_b: H^0(i^*\cC) \to H^0(i^*\psi_{\pi}(\bQ_{\wti{X}}[n]))\ee
obtained by restricting the variation morphism 
\be\label{888} \var: H^0(i^*\phi_{\pi}(\bQ_{\wti{X}}[n])) \lra H^0(i^*\psi_{\pi}(\bQ_{\wti{X}}[n])) \ee
to the subspace $$H^0(i^*\cC)=\im\Big(T-1: H^0(i^*\phi_{\pi}(\bQ_{\wti{X}}[n])) \to H^0(i^*\phi_{\pi}(\bQ_{\wti{X}}[n])) \Big) \subset H^0(i^*\phi_{\pi}(\bQ_{\wti{X}}[n])).$$ 
So in order to show that $\iota_b$ is injective, we need to prove that 
\be\label{ccc} \Ker(\var) \cap \im (T-1)=\{0\}.
\ee
We shall first show that 
\be\label{bbb}
\Ker(T-1)=\Ker(\var).
\ee
As $T-1=\can \circ \var$, we have that $\Ker(\var) \subset \Ker(T-1)$. So to establish the equality (\ref{bbb}), it suffices to show that both kernels have the same dimension. 

By applying the functor $i^*$ to the distinguished triangle (\ref{999}) and taking the cohomology of the resulting triangle, we see that the variation morphism (\ref{888}) fits into an exact sequence:
\be\label{000} 
 0=H^{-1}(i^*\psi_{\pi}(\bQ_{\wti{X}}[n])) \to
 H^{n+1}(i^*t^! \bQ_{\wti{X}}) \to H^0(i^*\phi_{\pi}(\bQ_{\wti{X}}[n])) \mathop{\lra}^{\var} H^0(i^*\psi_{\pi}(\bQ_{\wti{X}}[n]))
 \to \cdots \ee
Moreover, using the isomorphisms  $H^k(i^*\phi_{\pi}(\bQ_{\wti{X}}[n])) \simeq H^{k+n}(F;\bQ) \simeq H^k(i^*\psi_{\pi}(\bQ_{\wti{X}}[n]))$ for $k+n>0$, 
together with the fact that $F$ is $(n-1)$-connected and $n>2$, we get from (\ref{000}) that 
\be \Ker\Big(  H^0(i^*\phi_{\pi}(\bQ_{\wti{X}}[n])) \mathop{\lra}^{\var} H^0(i^*\psi_{\pi}(\bQ_{\wti{X}}[n])) \Big) \simeq H^{n+1}(i^*t^! \bQ_{\wti{X}}). \ee
We claim that 
\be\label{0000} H^{n+1}(i^*t^! \bQ_{\wti{X}}) \simeq \Ker\Big( T-1: H^0(i^*\phi_{\pi}(\bQ_{\wti{X}}[n])) \to H^0(i^*\phi_{\pi}(\bQ_{\wti{X}}[n])) \Big) \ee
which, in view of the above discussion, yields the equality (\ref{bbb}). In order to prove (\ref{0000}), let us first denote by $s:\wti{X} \setminus X \hookrightarrow \wti{X}$ the open embedding complementary to $t:X \hookrightarrow \wti{X}$. Note that since $t$ is closed, we have that $t^*t_* \simeq id$. Therefore, by using the inclusion $e:=t \circ i:\{x\} \hookrightarrow {\wti{X}}$ as above, we obtain:
\be 
\begin{split}
H^{n+1}(i^*t^! \bQ_{\wti{X}})  \simeq H^{n+1}(i^*t^*t_*t^! \bQ_{\wti{X}}) \simeq H^{n+1}(e^*t_*t^! \bQ_{\wti{X}}) \simeq \cH^{n+1}(t_*t^! \bQ_{\wti{X}})_x.
\end{split}
\ee
Moreover, by using the attaching triangle \be t_*t^!  \lra id \lra s_*s^* \overset{[1]}{\lra},\ee
as $\cH^n(\bQ_{\wti{X}})_x=0=\cH^{n+1}(\bQ_{\wti{X}})_x$, we get that:
\be 
\begin{split}
\cH^{n+1}(t_*t^! \bQ_{\wti{X}})_x & \simeq \cH^n(s_*s^*\bQ_{\wti{X}})_x \simeq \bH^n(B^{2n+2}_x;s_*\bQ_{\wti{X} \setminus X}) \\ & \simeq H^n(B^{2n+2}_x \setminus X;\bQ) \simeq H^n(S^{2n+1}_x \setminus L;\bQ),
\end{split}
\ee
where $B^{2n+2}_x$ denotes as before a small enough euclidian ball around $x$ in $\wti{X}$, having as boundary the sphere $\partial B^{2n+2}_x=S^{2n+1}_x$. Finally, by the cohomology Wang sequence dual to (\ref{wang}), we have that
\be H^n(S^{2n+1}_x \setminus L;\bQ) \simeq \Ker\Big( T_x-1:H^n(F;\bQ) \lra H^n(F;\bQ) \Big), \ee
which finishes the proof of (\ref{0000}).

Therefore, by (\ref{ccc}) and (\ref{bbb}), the claim on the injectivity of $\iota_b$ is equivalent with
\be
\Ker(T-1) \cap \im(T-1)=\{0\},
\ee
which follows from our assumption \ref{ass} on $T_x$.

\end{proof}

We can now prove the following
\bp\label{200}
Under the assumption \ref{ass}, the zig-zag sequence corresponding to (\ref{eight}) splits,  i.e., there exists a zig-zag morphism   $$\sigma :\cZ(\psi_{\pi}(\bQ_{\wti{X}}[n])) \to \cZ({\cC})$$ so that $\sigma \circ \iota = id_{\cZ({\cC})}$.
\ep

\begin{proof} This amounts to defining vector space homomorphisms $\sigma_a:H^n(F,L;\bQ) \to A$ and $\sigma_b:H^n(F;\bQ) \to B$ so that $\sigma_a \circ \iota_a=id_A$, $\sigma_b \circ \iota_b=id_B$ and $\sigma_b \circ \beta=\beta' \circ \sigma_a$. Note that then automatically
$\sigma_a \circ \alpha =0$, as
\[ \sigma_a \alpha = (\beta')^{-1} \beta' \sigma_a \alpha =
  (\beta')^{-1} \sigma_b \beta \alpha =0 \]
by exactness.

In order to define $\sigma_a$, let us first describe a basis for the vector space $H^n(F,L;\bQ)$.
Let $\{v_1, \cdots, v_m\}$ be a basis for $\Ker(\beta)$ and $\{a_1, \cdots, a_p\}$ a basis for $A$. Let $A':=\im(\iota_a) \subset H^n(F,L;\bQ)$. As $\beta|_{A'}$ is injective, we have that $\Ker(\beta) \cap A'=0$. Therefore, the set of vectors $\{v_1, \cdots, v_m, \iota_a(a_1), \cdots, \iota_a(a_p\}$ is  linearly independent  in $H^n(F,L;\bQ)$. Next, consider the short exact sequence
$$
0 \lra \Ker(\beta) \lra H^n(F,L;\bQ) \overset{q}{\lra} Q:=H^n(F,L;\bQ)/{\Ker(\beta)} \lra 0
$$
and note that $\Ker(\beta)=\Ker(q)$. Moreover, since $q|_{A'}$ is a monomorphism, it follows that $\{ q\iota_a(a_1), \cdots q\iota_a(a_p) \}$ are linearly independent set of vectors in $Q$.  Extend this collection of vectors to a basis
$\{ q\iota_a(a_1), \cdots q\iota_a(a_p), q_1, \cdots , q_k \}$  of $Q$, and define a homomorphism $s:Q \to 
H^n(F,L;\bQ)$ by the rule: $s(q\iota_a(a_j))=\iota_a(a_j)$, for $j=1,\cdots,p$, and by choosing arbitrary values in $q^{-1}(q_j)$ for $s(q_j)$, $j=1,\cdots,k$. Then it is easy to see that $q \circ s=id_Q$, hence
$$H^n(F,L;\bQ)=\Ker(\beta) \oplus \im(s).$$  
So a basis for $H^n(F,L;\bQ)$ can be chosen as 
$$\{v_1, \cdots, v_m,\iota_a(a_1), \cdots, \iota_a(a_p), s(q_1), \cdots, s(q_k) \}.$$
We can now define the map $\sigma_a:H^n(F,L;\bQ) \to A$ as follows: $\sigma_a(v_j)=0$ for all $j=1,\cdots,m$, $\sigma_a(\iota_a(a_j))=a_j$ for all $j=1,\cdots,p$, and $\sigma_a(s(q_j))=0$ for all $j=1,\cdots,k$. Clearly, we also have $\sigma_a \circ \iota_a=id_A$.

In order to define $\sigma_b$, we first describe a basis for $H^n(F;\bQ)$ as follows. With the above definition of $Q$, let $\bar{\beta}:Q \hookrightarrow H^n(F;\bQ)$ be the canonical inclusion so that $\beta=\bar{\beta} \circ q$. Then 
$$\{ \bar{\beta}q\iota_a(a_1), \cdots \bar{\beta}q\iota_a(a_p), \bar{\beta}(q_1), \cdots , \bar{\beta}(q_k) \}$$ is a set of linearly independent vectors in $H^n(F;\bQ)$. Note that $\bar{\beta}q\iota_a(a_j)=\beta\iota_a(a_j)=\iota_b \beta'(a_j)$. Also, since $\beta'$ is an isomorphism, $\{\beta'(a_1), \cdots, \beta'(a_p)\}$ is a basis for $B$. Extend the above collection of vectors to a basis $$\{\iota_b \beta'(a_1),\cdots,\iota_b \beta'(a_p), \bar{\beta}(q_1), \cdots , \bar{\beta}(q_k), e_1,\cdots,e_l \}$$ for $H^n(F;\bQ)$. We can now define $\sigma_b:H^n(F;\bQ) \to B$ as follows: $\sigma_b(\iota_b \beta'(a_j))=\beta'(a_j)$ for all $j=1,\cdots,p$, $\sigma_b(\bar{\beta}(q_j)=0$ for all $j=1,\cdots,k$, and $\sigma_b(e_j)=0$ for all $j=1,\cdots,l$. Then we have by definition that $\sigma_b \circ \iota_b=id_B$. 

Finally, we check the commutativity relation $\sigma_b \circ \beta=\beta' \circ \sigma_a$ on basis elements of $H^n(F,L;\bQ)$. For any index $j$ in the relevant range, we have: 
\bn
\item[(i)] $\sigma_b \beta(v_j)=0=\beta' \sigma_a (v_j)$ by the choice of $v_j \in \Ker(\beta)$ and by the definition of $\sigma_a$.
\item[(ii)] $\sigma_b \beta(\iota_a(a_j))=\sigma_b \iota_b \beta'(a_j)=\beta'(a_j)=\beta' \sigma_a (\iota_a(a_j))$, where the second equality follows from $\sigma_b \circ \iota_b=id_B$, and the 
 third equality is a consequence of $\sigma_a \iota_a = id_A$.
\item[(iii)] $\sigma_b \beta(s(q_j))=\sigma_b\bar{\beta} q(s(q_j))=\sigma_b(\bar{\beta}(q_j)=0$, and $\beta' \sigma_a(s(q_j))=\beta'(0)=0.$
\en

\end{proof}

As a consequence, we obtain the following
\bc \label{corsplitting}
Under the assumption \ref{ass}, there is a 
splitting of the short exact sequence (\ref{eight}), hence:
$$\psi_\pi\bQ_{\wti{X}}[n]\simeq \cI\cS_X \oplus \cC.$$
\ec

\begin{proof}
Since $\beta'$ is an isomorphism, by the second part of Theorem \ref{MV} the existence of such a splitting for (\ref{eight}) is equivalent to a splitting at the zig-zag level. So the result follows from Proposition \ref{200}.

\end{proof}

\subsection{On self-duality}\label{sd}
In this section, we use the theory of ziz-zags to provide a topological interpretation of the self-duality of the intersection space complex $\cI\cS_X$ shown in part $({d})$ of our main Theorem \ref{thmIS}.

We will still be working under the assumption \ref{ass}. Let $\cD$ be the Verdier dualizing functor in $D^b_c(X)$. Since $X$ is compact, for any  $K\in D^b_c(X)$  there is a non-degenerate pairing
\be\label{ten}
\bH^{-i}(X;\cD K)\otimes \bH^i(X;K)\lra \bQ.
\ee
Recall that the Verdier duality functor $\cD$ fixes the category $Perv(X)$, i.e., it sends perverse sheaves to perverse sheaves.

We will use part $(b)$ of Theorem \ref{MV}  to show that the intersection-space complex $\cI\cS_X$ is self-dual in $Perv(X)$, i.e., there is an isomorphism $$\cI\cS_X \simeq \cD(\cI\cS_X).$$ More precisely, we will use a  duality functor $\cD_{\cZ}$ in the category $Z(X,x)$ (see \cite{R}[Sect.4]) to show that the zig-zag associated to $\cI\cS_X$ is self-dual. Then  Theorem \ref{MV}  will generate a corresponding 
self-duality isomorphism for $\cI\cS_X$.

\bigskip

Let us first describe the duality functor $\cD_{\cZ}$ on the zig-zag category $Z(X,x)$. Recall that $X$ is a $n$-dimensional  projective hypersurface with only one isolated singularity $x$, and $i:\{x\} \hookrightarrow X$ and $j:X^{\circ}=X\setminus \{x\} \hookrightarrow X$ denote the respective inclusion maps. The following identities are well-known: 
\be\label{11} i^*Rj_* \simeq i^!Rj_! [1] \ , \ \cD i^*\simeq i^!\cD \ , \ \cD j^*\simeq j^*\cD \ , \ \cD i_*\simeq i_!\cD\simeq i_*\cD \ , \ \cD Rj_*\simeq Rj_!\cD.
\ee

Let us now fix an object  $K \in Perv(X)$ with associated zig-zag:
\be\label{300}
\begin{CD}
H^{-1}(i^*Rj_*j^* K) @>\alpha>> H^0(i^! K) @>\beta>> H^0(i^* K) @>\gamma>> H^{0}(i^*Rj_*j^* K), 
\end{CD}
\ee
and recall that $j^* K \simeq \cL[n]$, for some local system $\cL$ with finite dimensional stalks on $X^{\circ}$.
The zig-zag $\cZ(\cD K)$  associated to the Verdier dual $\cD K$ is then defined  by the triple
$$(j^*\cD K, H^0(i^!\cD K), H^0(i^*\cD K)),$$ together with the exact sequence:
\be
\begin{CD}
H^{-1}(i^*Rj_*j^*\cD K) @>u>> H^0(i^!\cD K) @>v>> H^0(i^*\cD K) @>w>> H^{0}(i^*Rj_*j^*\cD K).
\end{CD}
\ee
By using (\ref{ten}), (\ref{11}) and Remark \ref{rem1}, $\cZ(\cD K)$ is then isomorphic to the triple 
$$(\cD j^*K, H^0(i^* K)^\vee, H^0(i^! K)^\vee)=(\cL^\vee[n], H^0(i^* K)^\vee, H^0(i^! K)^\vee),$$ together with the exact sequence:
\be \label{seq.dualzigzag}
\begin{CD}
H^{0}(i^*Rj_*j^* K)^\vee @>\gamma^\vee>> H^0(i^* K)^\vee @>\beta^\vee>> H^0(i^! K)^\vee @>\alpha^\vee>> H^{-1}(i^*Rj_*j^* K)^\vee,
\end{CD}
\ee
where for a vector space $V$, local system $\cL$ and  homomorphism $f$, we denote by $V^\vee$, $\cL^\vee$, $f^\vee$ their respective duals. In other words, the zig-zag of the dual complex $\cD K$ is obtained by ``dualizing" the zig-zag of $K$, i.e., by considering the corresponding dual vectors spaces and dual maps in (\ref{300}). 

\bd \label{def.dualofzigzag}
The above operation of dualizing a zig-zag defines a duality functor on $Z(X,x)$, denoted by $\cD_{\cZ}$, which is compatible with the Verdier dual on $Perv(X)$, i.e., we define
\be\label{12}
\cD_{\cZ}(\cZ (K))=(\cL^\vee[n], H^0(i^* K)^\vee, H^0(i^! K)^\vee)
\ee
together with the sequence (\ref{seq.dualzigzag}).
\ed

\br In terms of the local system $\cL:=j^*K[-n]$ associated to $K \in Perv(X)$, the zig-zags for $K$ and $\cD K$ correspond respectively to the long exact sequences:
\be
\begin{CD}
\cZ(K): \ \ \ \ H^{n-1}(i^*Rj_*\cL) @>\alpha>> H^0(i^! K) @>\beta>> H^0(i^* K) @>\gamma>> H^{n}(i^*Rj_*\cL), 
\end{CD}
\ee
and 
\be
\begin{CD}
\cZ(\cD K): \ \ H^{n-1}(i^*Rj_*\cL^\vee) @>\gamma^\vee>> H^0(i^* K)^\vee @>\beta^\vee>> H^0(i^! K)^\vee @>\alpha^\vee>> H^{n}(i^*Rj_*\cL^\vee).
\end{CD}
\ee
Indeed, for any $r \in \bZ$, we have
\be
\begin{split}
H^{r}(i^*Rj_*j^* K)^\vee & \simeq H^{n+r}(i^*Rj_*\cL)^\vee \overset{(\ref{ten})}{\simeq} H^{-n-r}(\cD (i^*Rj_*\cL)) \simeq H^{-n-r}(i^!Rj_!\cD(\cL))) \\ &\simeq H^{-n-r}(i^!Rj_!\cL^\vee[2n]) \simeq H^{n-r}(i^!Rj_!\cL^\vee) \overset{(\ref{11})}{\simeq} H^{n-r-1}(i^*Rj_*\cL^\vee).
\end{split}
\ee
\er

We can now prove the following
\bp\label{13} Under the assumption \ref{ass}, the zig-zag associated to the intersection-space complex $\cI\cS_X$ is self-dual in the  category $Z(X,x)$.
\ep

\begin{proof}
Applying $\cZ$ to the short exact sequence
\[ 0\longrightarrow \cC \longrightarrow \psi_\pi \bQ_{\wti{X}}[n]
  \longrightarrow \cI\cS_X \longrightarrow 0, \]
we obtain the commutative diagram
\begin{equation} \label{dia3by3}
\xymatrix{
0 \ar[d] & 0 \ar[d] & 0 \ar[d] & 0 \ar[d] \\
0 \ar[d] \ar[r] & A \ar[d] \ar[r]^{\simeq}_{\beta'} & B \ar[d]_{\iota_b} \ar[r] & 0 \ar[d] \\
H^{n-1}(L;\bQ) \ar[d] \ar[r]_{\alpha} & H^n (F,L;\bQ) \ar[d] \ar[r]_{\beta} & H^n (F;\bQ) 
    \ar[d]_{p_b} \ar@{->>}[r]_{\gamma} & 
     H^n (L;\bQ) \ar[d]_{p_w} \\
V \ar[d] \ar[r]_{\wti{\alpha}} & A' \ar[d] \ar[r]_{\wti{\beta}} & B' \ar[d] \ar[r]_{\wti{\gamma}} & 
     W \ar[d] \\
0 & 0  & 0 & 0, \\
} 
\end{equation}
where
\[ A' = H^0 (i^! \cI\cS_X),~ B' = H^0 (i^* \cI\cS_X) \]
and
\[ V = H^{-1}(i^* Rj_* j^* \cI\cS_X) \simeq H^{n-1}(L;\bQ),~
   V = H^{0}(i^* Rj_* j^* \cI\cS_X) \simeq H^{n}(L;\bQ), \]
using that $j^* \cI\cS_X = \bQ_{X^{\circ}}[n]$.
The rows of this diagram are known to be exact, but we do \emph{not} know a priori that the
columns are exact, since $\cZ$ is generally not an exact functor.
However, using the splitting obtained in Corollary \ref{corsplitting}, there is a commutative diagram
\[ \xymatrix{
0 \ar[r] & \cC \ar@{=}[d] \ar[r] & \psi_\pi \bQ_{\wti{X}}[n] \ar[d]_{\simeq} \ar[r]
  & \cI\cS_X \ar@{=}[d] \ar[r] & 0\\
0 \ar[r] & \cC \ar[r]^>>>>{incl} & \cC \oplus \cI\cS_X \ar[r]^{proj}
  & \cI\cS_X \ar[r] & 0,
} \]
where the bottom row is the standard short exact sequence associated to the direct sum, i.e.
the map labelled $incl$ is the standard inclusion and the map labelled $proj$ is the standard
projection. Since the zig-zag category is additive and $\cZ$ an additive functor, we obtain
commutative diagrams such as e.g.
\[ \xymatrix{
0 \ar[r] & A \ar@{=}[d] \ar[r] & H^n (F,L;\bQ) \ar[d]_{\simeq} \ar[r]
  & A' \ar@{=}[d] \ar[r] & 0\\
0 \ar[r] & A \ar[r]^>>>>>>{incl} & A \oplus A' \ar[r]^{proj}
  & A' \ar[r] & 0,
} \]
This proves that the second column of diagram (\ref{dia3by3}) is exact.
Similarly, all other columns of (\ref{dia3by3}) are exact.
We claim that $\im \beta = \im \iota_b$ in $H^n (F;\bQ)$. To establish
this, we observe first that 
\[ \im \iota_b = \im (\iota_b \beta') = \im (\beta \iota_a )\subset \im \beta. \]
Thus it remains to show that $\beta$ and $\iota_b$ have the same rank.
By the Wang sequence, we have the following cohomological version of \cite{S}[Prop.2.2]: $$\im(\beta) \simeq \im(T_x-1) \simeq B.$$
This proves the claim.
Diagram (\ref{dia3by3}) shows that $\wti{\gamma}$ is surjective.
Suppose that $\wti{\gamma}(b')=0$. Then there is a class $x\in H^n (F;\bQ)$
with $p_b (x)=b'$ and $p_w (\gamma (x))=\wti{\gamma} (p_b (x))=\wti{\gamma} (b')=0$.
As $p_w$ is an isomorphism, $\gamma (x)=0$. So $x$ is in $\im \beta = \im \iota_b$,
which implies that $b'=p_b (x)=0.$ We have shown that $\wti{\gamma}$ is injective,
hence an isomorphism. This implies that $\wti{\beta}$ is the zero map
and $\wti{\alpha}$ is an isomorphism. So the zig-zag $\cZ(\cI\cS_X)$ for $\cI\cS_X$ looks like:
\be\label{zzis}
\begin{CD}
 H^{n-1}(L;\bQ) @>\simeq>> H^n(F,L;\bQ)/A @>0>> H^n(F;\bQ)/B @>\simeq>> H^n(L;\bQ).
\end{CD}
\ee
By Definition \ref{def.dualofzigzag}, the zig-zag $\cZ(\cD(\cI\cS_X))$ for the Verdier dual of $\cI\cS_X$ is given by $\bQ_{X^{\circ}}[n]$ on $X^{\circ}$, together with the exact sequence (dualizing the corresponding one for $\cZ(\cI\cS_X))$:
\be
\begin{CD}
 H^{n}(L;\bQ)^\vee @>\simeq>>  (H^n(F;\bQ)/B)^\vee @>0>>  (H^n(F,L;\bQ)/A)^\vee @>\simeq>> H^{n-1}(L;\bQ)^\vee.
\end{CD}
\ee
An isomorphism between the zig-zags $\cZ(\cI\cS_X)$ and $\cZ(\cD(\cI\cS_X))$ can now be defined 
as follows: The orientation of $X^\circ$ specifies a self-duality isomorphism
\[ d: j^* \cI\cS_X = \bQ_{X^\circ}[n] \cong \cD \bQ_{X^\circ}[n] =
   \cD j^* \cI\cS_X = j^* \cD \cI\cS_X. \]
The induced isomorphism
\[ d_*: H^{-1} (i^* Rj_* j^* \cI\cS_X) \cong H^{-1} (i^* Rj_* j^* \cD \cI\cS_X) \] 
(similarly on $H^0$) corresponds to
the non-degenerate Poincar\'e duality pairing $H^{n-1}(L;\bQ) \otimes H^{n}(L;\bQ) \to \bQ$ for  the link $L$.
It determines uniquely isomorphisms
$H^n (F,L;\bQ)/A \cong (H^n (F;\bQ)/B)^\vee$ and
$H^n (F;\bQ)/B \cong (H^n (F,L;\bQ)/A)^\vee$ such that
{\small
\[ \begin{CD}
H^{-1}(i^*Rj_* j^* \cI\cS_X) @>\simeq>> H^n (F,L;\bQ)/A @>0>> H^n (F;\bQ)/B @>\simeq>> 
    H^{0}(i^*Rj_* j^* \cI\cS_X)\\ 
@VV{d_*}V @VVV @VVV @VV{d_*}V \\
H^{-1}(i^*Rj_* j^* \cD \cI\cS_X) @>\simeq>> (H^n (F;\bQ)/B)^\vee @>0>> 
  (H^n (F,L;\bQ)/A)^\vee @>\simeq>> 
  H^{0}(i^*Rj_* j^* \cI\cS_X)
\end{CD} \]
}
commutes.

\end{proof}

\bc \label{corselfdual}
Under the assumption \ref{ass}, the perverse sheaf $\cI\cS_X$ is self-dual in $Perv(X)$.
\ec

\begin{proof} In Prop.\ref{13}, we have constructed an isomorphism in 
$\Hom(\cZ(\cI\cS_X),\cD_{\cZ}(\cZ(\cI\cS_X)))$. Hence by Theorem \ref{MV}(a), there exists an isomorphism in $\Hom(\cI\cS_X,\cD (\cI\cS_X))$. Therefore, $\cI\cS_X$ is self-dual in $Perv(X)$.

\end{proof}

\br By our calculation in the proof of Proposition \ref{13}, we obtain the following stalk calculation for the intersection-space complex associated to a hypersurface $X$ with an isolated singular point $x$ satisfying the assumption \ref{ass}:
\be
\cH^r(\cI\cS_X)_x=\begin{cases} \bQ \ \ \ \ \ \ \ \ \ \ \ , \  {\rm for} \ r=-n, \\ H^n(L;\bQ) \ , \ {\rm for} \ r=0  \\ 0 \ \ \ \ \ \ \ \ \ \ \ \ , \  {\rm for} \ r \neq -n,0. \end{cases}
\ee
Moreover, at a smooth point $y \in X^{\circ}$, we get by the splitting of the nearby cycles:
\be
\cH^r(\cI\cS_X)_y=\cH^r(\psi_{\pi}(\bQ_{\wti{X}}[n]))_y=\begin{cases} \bQ  \ \ , \  {\rm for} \ r=-n, \\  0  \ \ \ , \  {\rm for} \ r \neq -n. \end{cases}
\ee

\er

\br Note that one could attempt to {\it define} an intersection-space complex as the unique perverse isomorphism class corresponding to the zig-zag  given by the triple $(\bQ_{X^{\circ}}[n], H^n(F,L;\bQ)/A, H^n(F;\bQ)/B)$ together with the exact sequence (\ref{zzis}). However, the resulting perverse sheaf, while being self-dual, has no clear relation to Hodge theory.
\er

\end{document}